\newtheorem{theorem}{Theorem}%[section]
\newtheorem{corollary}[theorem]{Corollary}
\newtheorem{proposition}[theorem]{Proposition}
\newtheorem{lemma}[theorem]{Lemma}
\theoremstyle{remark}
\newtheorem{remark}{Remark}
\newtheorem{example}{Example}
\newcommand{\SL}{\operatorname{SL}}
\newcommand{\spt}{\operatorname{spt}}
\newcommand{\sgn}{\operatorname{sgn}}
\newcommand{\D}{\operatorname{D}}
\newcommand{\hol}{\operatorname{hol}}
\newcommand{\Tr}{\operatorname{Tr}}
\begin{document}
\title{Recurrence and congruences for the smallest parts function}
\author{Wei Wang}
\address{Department of Mathematics, Shaoxing University, Shaoxing 312000, China}
\email{weiwang\_math@163.com}

\begin{abstract}
Let $\spt(n)$ be the number of smallest parts in the partitions of $n$. In this paper, we give some generalized Euler-like recursive formulas for the $\spt$ function in terms of Hecke trace of values of special twisted quadratic Dirichlet series. As a corollary, we give a closed form expression of the power series $\sum_{n\geq 0}\spt(\ell n-\delta_{\ell})q^n\pmod{\ell}$, $\delta_{\ell}:=(\ell^2-1)/24$, by Hecke traces for weight $\ell+1 $ cusp forms on $\SL_2(\mathbb{Z})$. We further establish an incongruence result for the $\spt$ function.
\end{abstract}
\keywords{Smallest parts function, Harmonic Maass forms, Modular forms modulo $\ell$}
\subjclass[2020]{11F37,11P82,11P83,05A17}
\thanks{ }
%\today
\maketitle
\section{Introduction}\label{section_intro}
Let $p(n)$ denote the number of partitions of $n$. Euler’s recurrence gives an efficient way to compute the partition numbers:
\begin{equation}\label{equat_euler-recurr}
\sum_k (-1)^kp(n-\omega(k))=0, \text{ for } n>0,
\end{equation}
where $\omega(k):=(3k^2+k)/2$ is the pentagonal number. Recently, Gomez, Ono, Saad, and Singh \cite{zbMATH08044761}  presented a generalization of (\ref{equat_euler-recurr}) using the theory of modular forms. They proved that for positive $n$, 
\begin{equation}\label{equat_euler-recurr-gen}
p(n)=\frac{1}{g_v(n,0)}\left(\alpha_v\sigma_{2v-1}(n)+\Tr_{2v}(n)+\sum_{k\in\mathbb{Z}\setminus \{0\}}(-1)^kg_{v}(n,k)p(n-\omega(k))\right),
\end{equation}
where $\sigma_{2v-1}(n)$ is the divisor function, $\Tr_{2v}(n)$ is the $n$-th weight $2v$ Hecke trace of values of special twisted quadratic Dirichlet series, and each $g_{v}(n,k)$ is a polynomial in $n$ and $k$. Later, Bhowmik, Tsai and Ye \cite{zbMATH08099054} gave similar Euler-like recursive formulas for the $t$-colored partition function when $t = 2$ or $t = 3$.

The smallest parts function $\spt(n)$, introduced by Andrews \cite{zbMATH05374875}, counts the number of smallest parts in the partitions of $n$.  For example, the partitions of $n = 5$ are (with the smallest parts underlined)
\begin{align*}
&\underline{5},~
4+\underline{1},~
3+\underline{2},~
3+\underline{1}+\underline{1},~
2+2+\underline{1},\\
&\underline{2}+\underline{1}+\underline{1}+\underline{1},~
\underline{1}+\underline{1}+\underline{1}+\underline{1}+\underline{1},
\end{align*}
and thus $\spt(5)=14$. This function not only plays a crucial role in combinatorics but also possesses rich arithmetic properties that have been investigated by many authors (see for example \cite{zbMATH05941220,zbMATH06191432,zbMATH06190321,zbMATH06133929}). The origin of its arithmetic properties is attributed to the connection between its generating function and harmonic Maass forms, see Section \ref{section_weak-Harm}. To state our results, we define the following mock Eisenstein series introduced by Zagier \cite{zbMATH05718024}:
\[
F_k:=\sum_{n\neq 0}(-1)^n\left(\frac{-3}{n-1}\right)n^{k-1}\frac{q^{n(n+1)/6}}{1-q^n}=-\sum_{r>s>0}	\left(\frac{12}{r^2-s^2}\right)s^{k-1}q^{rs/6}=\sum_{n\geq 1}a_{k}(n)q^n.
\]
We have the following Euler-like recurrence for $\spt(n)$ (see \cite[Theorem 1]{zbMATH06398837}):
\[
\sum_k(-1)^k\spt(n-\omega(k))=a_2(n), \text{ for } n>0.
\]
The primary goal of this paper is to generalize the above Euler-like recurrence, analogous to the generalization presented in (\ref{equat_euler-recurr-gen}). The differences between the $\spt$ function and the partition function lie in two key aspects: first, the generating function of the $\spt$ function is no longer a weakly holomorphic modular form but a harmonic Maass form, thus necessitating the employment of the technique of holomorphic projection. Second, the key step in the proof of (\ref{equat_euler-recurr-gen}) relies on expressing the generating function of the partition function as a Maass-Poincar\'e series, whereas in our case, the Maass-Poincar\'e series fails to converge,  and we shall adopt the method of analytic continuation to overcome this difficulty.

Before stating our results, we first introduce some notations. For a Hecke eigenform $f=\sum_{n\geq 1}a_f(n)q^n$ of weight $2k$ on $\SL_2(\mathbb{Z})$, we define its twisted quadratic Dirichlet series by 
\[
D_f:=\sum_{n\geq 1}\left(\frac{12}{n}\right) a_f\left(\frac{n^2-1}{24}\right)\left(\frac{1}{(n-1)^{2k-1}}-\frac1{(n+1)^{2k-1}}\right),
\]
(for the convergence of the series, see Section \ref{subsection_proof-inner-product}), and the weight $2k$ Hecke trace by
\[
\Tr_{2k}(n):=\frac{\Gamma(2k-1)}{(\pi/3)^{2k-1}}\sum_{f}a_f(n)\frac{D_f}{\langle f,f\rangle},
\]
where the sum runs over the normalized Hecke eigenforms of weight $2k$ and $\langle f,f\rangle$ denotes the Petersson norm of $f$. The Petersson inner product is defined by
\[
\langle f,g\rangle:=\int_{\SL_2(\mathbb{Z})\setminus{\mathbb{H}}}f(z)\overline{g(z)}y^{2k-2}dxdy,~z=x+iy,
\]
for modular forms $f$ and $g$ of weight $2k$ for $\SL_2(\mathbb{Z})$ with at least one of them a cusp form. Similar series have already appeared in prior works; see, for example, \cite{zbMATH07957369,zbMATH06534398}. We remark here that although the notation adopted here coincides with that in (\ref{equat_euler-recurr-gen}), their specific definitions are entirely different. 

Define the polynomials $q_v(n,k)$ by
\[
q_v(n,k)=2^{-2v}\cdot \sum_{0\leq j\leq v}\binom{2v+1}{2j+1}\left((6k+1)^2-24n\right)^j(6k+1)^{2v-2j}.
\]
The polynomials $q_v(n,k)$ are indeed the Chebyshev polynomials, they can be expressed by
\begin{equation}\label{equation_q-and-chebyshv}
q_v(n,k)=(6n)^vU_{2v}\left(\frac{6k+1}{\sqrt{24n}}\right). 
\end{equation}
Let $s(n):=12\spt(n)+(24n-1)p(n)$ (by convention, we agree that $\spt(0)=0$ and $p(0)=1$) and 
\[
\mathcal{S}_v(z):=\sum_{\substack{n\geq 0\\k\in\mathbb{Z}}}(-1)^{k+1}q_v(n,k)\cdot s(n-\omega(k))q^n,~q=e^{2\pi iz}.
\]
The main theorem of this paper is the following:
\begin{theorem}\label{theorem_main}
For every $v\geq 1$, we have
\[
\mathcal{S}_v(z)=E_{2v+2}(z)-12F_{2v+2}(z)+\frac{\Gamma(2v+1)}{(\pi/3)^{2v+1}}\sum_{f}\frac{D_f}{\langle f,f\rangle}f(z),
\]
where the sum runs over the normalized Hecke eigenforms of weight $2v+2$ and  $E_{2v+2}(z)$ is the weight $2v+2$ Eisenstein series.
\end{theorem}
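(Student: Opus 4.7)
The plan is to realize $\mathcal{S}_v(z)$ as the holomorphic projection of a natural non-holomorphic weight $2v+2$ modular form on $\SL_2(\mathbb{Z})$, and then to decompose that projection in the standard basis $\{E_{2v+2}\}\cup\{f\}$ of $M_{2v+2}(\SL_2(\mathbb{Z}))$, where $f$ runs over normalized Hecke eigenforms. The engineered combination $s(n)=12\spt(n)+(24n-1)p(n)$ is chosen precisely so that $\sum_{n\geq 0}s(n)q^{24n-1}$ is the holomorphic part $\mathcal{M}^+$ of a weight $3/2$ harmonic Maass form $\widehat{\mathcal{M}}$, the harmonic Maass form connection highlighted in Section~\ref{section_intro}; this is the source object I would feed into the projection.

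The execution proceeds in three parts. First, I would multiply $\widehat{\mathcal{M}}$ by the weight $(2v+1)/2$ theta-type series $\Theta_v(z)=\sum_{k\in\mathbb{Z}}(-1)^k(6k+1)^{2v}q^{(6k+1)^2/24}$, obtained from the Jacobi--Euler theta $\sum_k(-1)^kq^{(6k+1)^2/24}$ by iterated raising. After rescaling $z\mapsto z/24$ and symmetrizing over coset representatives, the product becomes a non-holomorphic weight $2v+2$ form $H_v$ on $\SL_2(\mathbb{Z})$. Second, I would apply $\pi_{\hol}$: any pair $(k,m)$ contributing to the $n$-th Fourier coefficient satisfies $24m-1+(6k+1)^2=24n$, that is $m=n-\omega(k)$, and the Gamma integral in the projection formula produces a Gegenbauer polynomial in $(6k+1)/\sqrt{24n}$; by \eqref{equation_q-and-chebyshv} this is exactly $q_v(n,k)/(6n)^v$, so the contribution from $\mathcal{M}^+$ reassembles into $\mathcal{S}_v(z)$. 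Third, the remaining non-holomorphic piece $\widehat{\mathcal{M}}-\mathcal{M}^+$, an incomplete gamma series indexed by perfect squares, yields after the same projection a Lambert-type expansion that matches $-12F_{2v+2}(z)$ term by term against the defining formula of $F_k$ given in the introduction.

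With the $\mathcal{S}_v$ and $-12F_{2v+2}$ pieces accounted for, the remainder of $\pi_{\hol}(H_v)$ lies in $M_{2v+2}(\SL_2(\mathbb{Z}))$, and I would write it as $cE_{2v+2}+\sum_f c_f f$. Matching the constant term forces $c=1$. For each cusp form $f$, self-adjointness of $\pi_{\hol}$ gives $c_f\langle f,f\rangle=\langle H_v,f\rangle$; unfolding this Petersson pairing against $\Theta_v$ in the Rankin--Selberg style collapses it to a quadratic Dirichlet series in the Fourier coefficients $a_f((n^2-1)/24)$. The Mellin integral then produces the gamma factor $\Gamma(2v+1)/(\pi/3)^{2v+1}$, and the pair of shifts $(n-1)^{-(1+2v)},(n+1)^{-(1+2v)}$ arises from a partial-fraction split of a rational function of $(6k+1)$ appearing in the unfolding; together these assemble exactly $D_f$.

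The principal obstacle, already flagged in the introduction, is that the Maass--Poincar\'e series representing $\sum p(n)q^n$ used in \cite{zbMATH08044761} fails to converge at the weight relevant here, so the Gomez--Ono--Saad--Singh strategy cannot be copied verbatim. I would circumvent this by introducing an auxiliary Eisenstein parameter $s$, forming an absolutely convergent family $H_v(z,s)$, applying $\pi_{\hol}$ first and only then analytically continuing to $s=0$; the delicate point is to verify that the non-holomorphic contribution producing $-12F_{2v+2}$ persists under this limit and that no additional holomorphic terms are created. A related technical matter is that $D_f$ itself is defined by a series converging only for $v$ sufficiently large, so the Rankin--Selberg unfolding must be read through its meromorphic continuation, as justified in Section~\ref{subsection_proof-inner-product}.
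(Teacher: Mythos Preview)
Your architecture matches the paper's: build a weight-$2v+2$ object from the harmonic Maass form and $\eta$, take its holomorphic projection, separate the $F_{2v+2}$ contribution coming from the non-holomorphic part (the paper does this via Mertens' formula, Proposition~\ref{prop_holoprojofspt}), and expand the remainder as $E_{2v+2}$ plus Hecke eigenforms with coefficients fixed by a Petersson unfolding that needs analytic continuation.

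The gap is in your construction of the weight-$2v+2$ object. The series $\Theta_v(z)=\sum_k(-1)^k(6k+1)^{2v}q^{(6k+1)^2/24}$ is, up to a constant, $\D^v\eta$, which is \emph{not} modular of any weight; the product $\widehat{\mathcal{M}}\cdot\Theta_v$ therefore does not transform like a weight-$(2v+2)$ form, and holomorphic projection is not defined for it. ``Symmetrizing over coset representatives'' does not repair this. The paper's device is the Rankin--Cohen bracket $[F,\eta]_v$, which is automatically modular of weight $2v+2$; Lemma~\ref{lemma_RC-alpha-eta} then gives $[\alpha,\eta]_v=24^{-v}\binom{2v}{v}\mathcal{S}_v$ directly from the binomial identity~\eqref{equation_com-ident}, so the Chebyshev polynomial $q_v(n,k)$ appears without any Gegenbauer computation inside a projection integral. (If you replace your $\Theta_v$ by the genuinely raised theta $R_{1/2}^v\eta$, which has weight $\tfrac12+2v$ rather than $(2v+1)/2$, the product \emph{is} modular and after projection agrees with the bracket up to a constant, so your route is salvageable once this is corrected.) For the continuation step the paper does not introduce an Eisenstein parameter: it realizes $F$ as the value at $s=\tfrac34$ of the Maass--Poincar\'e series $P_{-1,3/2,s}$ (Propositions~\ref{prop_poincare-analytic} and~\ref{prop_spt-poincare}), unfolds $\langle[P_{-1,3/2,s},\eta]_v,f\rangle$ for $\operatorname{Re}(s)>1$ to an explicit sum of Whittaker integrals, and continues the resulting hypergeometric expression to $s=\tfrac34$, where all terms carrying a factor $(s-\tfrac34)$ vanish and the surviving piece collapses to $D_f$ via ${}_2F_1(\tfrac32,1+2v;\tfrac32;n^{-2})=(1-n^{-2})^{-1-2v}$.
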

Examining the coefficient of $q^n$ in Theorem \ref{theorem_main}, we obtain the following recurrence relation.
\begin{corollary}\label{coro_recurr}
For $n\geq 1$, we have
\begin{align*}
q_v(n,0) s(n)=&\sum_{k\in\mathbb{Z}\setminus\{0\}}(-1)^{k+1}q_v(n,k)s(n-\omega(k))\\
&+\frac{4v+4}{B_{2v+2}}\sigma_{2v+1}(n)+12a_{2v+2}(n)-\Tr_{2v+2}(n),
\end{align*}
where $B_n$ is the $n$-th Bernoulli number and $\sigma_{2v+1}(n)=\sum_{d\mid n}d^{2v+1}$.
\end{corollary}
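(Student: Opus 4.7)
The plan is to derive the recurrence by extracting the coefficient of $q^n$ from both sides of the identity in Theorem~\ref{theorem_main} for each fixed $n\geq 1$ and then rearranging.

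On the left-hand side, by definition
\[
\mathcal{S}_v(z)=\sum_{n\geq 0}\Biggl(\sum_{k\in\mathbb{Z}}(-1)^{k+1}q_v(n,k)\,s(n-\omega(k))\Biggr)q^n,
\]
and since $\omega(0)=0$ and $(-1)^{0+1}=-1$, separating off the $k=0$ summand identifies the $q^n$-coefficient of $\mathcal{S}_v(z)$ as
\[
-q_v(n,0)\,s(n)+\sum_{k\in\mathbb{Z}\setminus\{0\}}(-1)^{k+1}q_v(n,k)\,s(n-\omega(k)).
\]
On the right-hand side, I use the classical Fourier expansion $E_{2v+2}(z)=1-\tfrac{4v+4}{B_{2v+2}}\sum_{n\geq 1}\sigma_{2v+1}(n)q^n$, the series $F_{2v+2}(z)=\sum_{n\geq 1}a_{2v+2}(n)q^n$ recalled in the introduction, and the very definition $\Tr_{2v+2}(n)=\frac{\Gamma(2v+1)}{(\pi/3)^{2v+1}}\sum_f a_f(n)\,D_f/\langle f,f\rangle$ to read off the cuspidal contribution. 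Collecting these, and noting that for $n\geq 1$ the constant term of $E_{2v+2}$ does not interfere, the $q^n$-coefficient of the right-hand side of Theorem~\ref{theorem_main} equals
\[
-\tfrac{4v+4}{B_{2v+2}}\sigma_{2v+1}(n)-12\,a_{2v+2}(n)+\Tr_{2v+2}(n).
\]

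Equating the two expressions and solving for $q_v(n,0)\,s(n)$ (equivalently, multiplying through by $-1$ and moving the sum over $k\neq 0$ to the other side) yields the corollary verbatim. There is no real obstacle here: the argument is a line of bookkeeping once Theorem~\ref{theorem_main} is in hand, and the only delicate point is the isolation of the $k=0$ term, which both produces the sign on the left-hand side and ensures that the sum on the right-hand side is taken over $k\in\mathbb{Z}\setminus\{0\}$.
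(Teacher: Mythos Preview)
Your proof is correct and follows exactly the approach indicated in the paper, which simply states that the corollary follows by ``examining the coefficient of $q^n$ in Theorem~\ref{theorem_main}.'' Your sign bookkeeping for the $k=0$ term, the Eisenstein series, $-12F_{2v+2}$, and the Hecke trace is all accurate.
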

Although Corollary \ref{coro_recurr} yields only a recurrence relation for $s(n)$,  for small values of $v$, using the convolution of divisor functions, we can obtain an explicit recurrence for $\spt(n)$.  We illustrate this with several examples below. For notational convenience, we set $\sigma_{2k-1}(0):=-\frac{B_{2k}}{4k}$.
\begin{example}
Corollary \ref{coro_recurr} with $v=1$ gives
\begin{align*}
(1-6n)\spt(n)=&\sum_{k\in\mathbb{Z}\setminus\{0\}}(-1)^{k+1}\left[(6k+1)^2-6n\right]\cdot\spt(n-\omega(k))\\
&-10\sigma_3(n)+24\sum_{\substack{i+j=n\\i,j\geq 0}}\sigma_1(i)\sigma_1(j)+a_4(n).
\end{align*}
Corollary \ref{coro_recurr} with $v=2$ gives
\begin{align*}
(36n^2-18n)\spt(n)=&\sum_{k\in\mathbb{Z}\setminus\{0\}}(-1)^{k+1}\left[36n^2-18n(6k+1)^2+(6k+1)^4\right]\cdot \spt(n-\omega(k))\\
&-576\sum_{\substack{i+j+k=n\\i,j,k\geq 0}}\sigma_1(i)\sigma_1(j)\sigma_1(k)+240\sum_{\substack{i+j=n\\i,j\geq 0}}\sigma_1(i)\sigma_3(j)+a_6(n).
\end{align*}
When $v=5$, the space of weight 12 cusp forms is one-dimensional, spanned by the Ramanujan $\Delta$ function. We find that
\begin{equation}\label{equation_Tr-and-tau}
\Tr_{12}(n)=-\frac{226437120}{691}\tau(n),
\end{equation}
and Corollary \ref{coro_recurr} with $v=5$ gives
\[
\tau(n)=\frac{691}{226437120}\sum_{k\in\mathbb{Z}}(-1)^kq_5(n,k)s(n-\omega(k))+\frac{1}{3456}\sigma_{11}(n)-\frac{691}{18869760}a_{12}(n).
\]
\end{example}
Using the relationship between the $\spt$ function and the second Atkin-Garvan moment, Andrews derived the following elegant congruences for the $\spt$ function:
\begin{equation}\label{equation_Andrews-con}
\begin{aligned}
&\spt\left(5n+4\right)\equiv 0\pmod 5,\\
&\spt(7n+5)\equiv 0\pmod 7,\\
&\spt(13n+6)\equiv 0\pmod {13}.
\end{aligned}
\end{equation}
These congruences are reminiscent of Ramanujan’s partition congruences
\begin{equation}\label{eqution_Ram-con}
\begin{aligned}
&p\left(5n+4\right)\equiv 0\pmod 5,\\
&p(7n+5)\equiv 0\pmod 7,\\
&p(11n+6)\equiv 0\pmod {11}.
\end{aligned}
\end{equation}
Using the recurrence relation (\ref{equat_euler-recurr-gen}), Bringmann, Craig and Ono \cite{zbMATH08123433} gave a closed form expression of the power series
\[
\sum_{n\geq 0}p(\ell n-\delta_{\ell})q^n \pmod{\ell},~\delta_{\ell}:=\frac{\ell^2-1}{24},
\]
by the generating function for the Hecke traces of $\ell$ ramified values of special Dirichlet series for weight $\ell-1$ cusp forms on $\SL_2(\mathbb{Z})$:
\[
\sum_{n\geq 1}p(\ell n-\delta_{\ell})q^n\equiv c_{\ell}\frac{\sum_{n\geq 1}\Tr_{\ell-1}(\ell n)q^n}{(q^{\ell};q^{\ell})_{\infty}} \pmod{\ell},
\]
for an explicit constant $c_{\ell}$. (We caution the reader that the symbol $\Tr$ in the preceding formula follows the convention of (\ref{equat_euler-recurr-gen}) and should not be confused with the notation for $\Tr$ used elsewhere in this paper.) Since there are no nontrivial cusp forms of weight 4, 6, and 10, this gives a new proof of Ramanujan’s congruences (\ref{eqution_Ram-con}). We establish an analogous version of the identity for the $\spt$ function as follows:
\begin{corollary}\label{corollary_modl}
Let $(q;q)_{\infty}:=\prod_{n\geq 1}(1-q^n)$. For every prime $\ell\geq 5$, we have
\[
\sum_{n\geq 1}\spt(\ell n-\delta_{\ell})q^n\equiv 12^{-1}\left(\frac{3}{\ell}\right)\frac{\sum_{n\geq 1}\Tr_{\ell+1}(\ell n)q^n}{(q^{\ell};q^{\ell})_{\infty}} \pmod{\ell}.
\]
\end{corollary}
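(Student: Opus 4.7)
The plan is to apply Corollary \ref{coro_recurr} at $v=(\ell-1)/2$, so that $2v+2=\ell+1$, and to reduce the resulting identity modulo $\ell$ at the specialisation $n=\ell N$. First I would compute $q_{(\ell-1)/2}(n,k)$ modulo $\ell$ directly from the binomial formula: all $\binom{\ell}{2j+1}$ vanish modulo $\ell$ except at $j=(\ell-1)/2$, and $2^{-(\ell-1)}\equiv 1\pmod\ell$ by Fermat, so
\[
q_{(\ell-1)/2}(n,k)\equiv\bigl((6k+1)^2-24n\bigr)^{(\ell-1)/2}\pmod\ell.
\]
At $n=\ell N$, using $(6k+1)^2=24\omega(k)+1$, this becomes $(6k+1)^{\ell-1}$, which by Fermat equals the indicator $[\ell\nmid 6k+1]$. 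Similarly, since $24m-1\equiv-(6k+1)^2\pmod\ell$ for $m=\ell N-\omega(k)$,
\[
s(\ell N-\omega(k))\equiv 12\,\spt(\ell N-\omega(k))-(6k+1)^2 p(\ell N-\omega(k))\pmod\ell.
\]

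For the Eisenstein and mock-Eisenstein contributions I would use Kummer's congruence $B_{\ell+1}/(\ell+1)\equiv B_2/2=1/12\pmod\ell$ (yielding $2(\ell+1)/B_{\ell+1}\equiv 24\pmod\ell$), $\sigma_\ell(\ell N)\equiv\sigma_1(\ell N)\equiv\sigma_1(N)\pmod\ell$, and $F_{\ell+1}\equiv F_2\pmod\ell$ from $s^\ell\equiv s$ in Zagier's formula. The pre-existing Euler-like recurrence for $\spt$ displayed above is equivalent to $F_2(z)=S(q)(q;q)_\infty$ with $S(q):=\sum_n\spt(n)q^n$; differentiating Euler's pentagonal theorem gives $\sum_k(-1)^k(6k+1)^2 q^{\omega(k)}=E_2(z)(q;q)_\infty$, so $\sum_k(-1)^k(6k+1)^2 p(\ell N-\omega(k))=-24\sigma_1(\ell N)$. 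Plugging everything into the rearranged identity
\[
\Tr_{\ell+1}(n)=\sum_{k\in\mathbb Z}(-1)^{k+1}q_{(\ell-1)/2}(n,k)\,s(n-\omega(k))+\tfrac{2(\ell+1)}{B_{\ell+1}}\sigma_\ell(n)+12\,a_{\ell+1}(n)
\]
and splitting $[\ell\nmid 6k+1]=1-[\ell\mid 6k+1]$: the unrestricted $\spt$-part contributes $-12\,a_2(\ell N)$, the unrestricted $(6k+1)^2 p$-part contributes $-24\sigma_1(N)\pmod\ell$, and the $[\ell\mid 6k+1]$ portion of the $p$-part vanishes because then $\ell^2\mid(6k+1)^2$. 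After the $\sigma_1(N)$ and $a_2(\ell N)$ terms cancel, we are left with
\[
\Tr_{\ell+1}(\ell N)\equiv 12\sum_{\substack{k\in\mathbb Z\\\ell\mid 6k+1}}(-1)^k\spt(\ell N-\omega(k))\pmod\ell.
\]

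To finish, I would reparametrise the restricted sum by $j=(6k+1)/\ell$. The identity $\omega(k)=\delta_\ell+\ell^2(j^2-1)/24$ (immediate from $(6k+1)^2=24\omega(k)+1$) shows that $\ell N-\omega(k)=\ell\bigl(N-\ell(j^2-1)/24\bigr)-\delta_\ell$ already lies in the progression $\ell n-\delta_\ell$, so summing against $q^N$ turns the right side into $12\,S^{(\ell)}(q)\,\Sigma_\ell(q)$, where $S^{(\ell)}(q):=\sum_{n\ge 1}\spt(\ell n-\delta_\ell)q^n$ and $\Sigma_\ell(q):=\sum_{k:\,\ell\mid 6k+1}(-1)^k q^{\ell(j^2-1)/24}$. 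As $k$ varies, $j$ runs through the single residue class $j_0\equiv\ell^{-1}\pmod 6$; comparing with $(q^\ell;q^\ell)_\infty=\sum_{k\in\mathbb Z}(-1)^k q^{\ell\omega(k)}$, either directly (when $\ell\equiv 1\pmod 6$) or via the involution $j\mapsto-j$ (when $\ell\equiv 5\pmod 6$), together with a short parity calculation on $(-1)^k$ and the shift $c=(\ell j_0-1)/6$, identifies $\Sigma_\ell(q)=\bigl(\frac{3}{\ell}\bigr)(q^\ell;q^\ell)_\infty$, with the sign matching the Legendre-symbol evaluation $\bigl(\frac{3}{\ell}\bigr)=1\Leftrightarrow\ell\equiv\pm1\pmod{12}$. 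Dividing by $(q^\ell;q^\ell)_\infty$ yields Corollary \ref{corollary_modl}.

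The main technical hurdle is the sign bookkeeping in $\Sigma_\ell(q)$---recognising $\bigl(\frac{3}{\ell}\bigr)$ from the interplay between $(-1)^k$ and $\ell^{-1}\pmod 6$. The remaining mod $\ell$ reductions of $q_v$, $B_{\ell+1}$, $\sigma_\ell$, and $F_{\ell+1}$ are standard once identified, and the auxiliary $\sigma_1(N)$ and $a_2(\ell N)$ terms cancel automatically once the $F_2$- and $E_2$-identities for $\spt$ and $p$ are brought to bear.
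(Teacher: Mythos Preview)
Your argument is correct and follows essentially the same route as the paper: specialise Theorem~\ref{theorem_main}/Corollary~\ref{coro_recurr} at $v=(\ell-1)/2$, reduce $q_v$ modulo $\ell$, use Kummer and Fermat to handle the Eisenstein and mock-Eisenstein terms, and then identify the restricted sum over $\ell\mid 6k+1$ with $\left(\frac{3}{\ell}\right)(q^\ell;q^\ell)_\infty\sum_n\spt(\ell n-\delta_\ell)q^n$. The only organisational difference is that the paper works at the generating-function level, writing $\mathcal{S}_{(\ell-1)/2}\equiv\mathcal{S}_0+\sum_f\tfrac{D_f}{\langle f,f\rangle}f$ via $E_{\ell+1}\equiv E_2$ and $F_{\ell+1}\equiv F_2$ (so the Eisenstein and mock-Eisenstein cancellations are absorbed into the single identity $\mathcal{S}_0=E_2-12F_2$), whereas you unpack this coefficient-by-coefficient through the $\spt$ Euler recurrence and the $E_2\cdot(q;q)_\infty$ identity; the two are equivalent.
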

Since there are no nontrivial cusp forms of weight 6, 8, and 14, the above result gives a new proof of Andrews' congruences (\ref{equation_Andrews-con}). The situation for other primes is also interesting. For example, when $\ell = 11$, by (\ref{equation_Tr-and-tau}) and $\Delta(z)=q(q;q)^{24}$, Corollary \ref{corollary_modl} yields
\[
\sum_{n\geq 0}\spt(11n+6)q^n\equiv 4(q;q)_{\infty}^{13}\pmod{11}.
\]
This gives a new proof of \cite[Theorem 6.1(6.3)]{zbMATH06190321}. The Ramanujan conjecture, asserting that simple congruences analogous to those in (\ref{eqution_Ram-con}) fail to hold for primes $\ell\geq 13$, was ultimately solved by Ahlgren and Boylan \cite{zbMATH02001026}. A similar conjecture exists for $\spt(n)$, supported by extensive numerical evidence (see \cite[Theorem 3]{zbMATH06865877}). It is further proved in \cite[Theorem 3]{zbMATH06865877} that the set of primes $\ell$ satisfying congruence has natural density zero. The conjecture can be reformulated in terms of Hecke traces as follows.
For prime $\ell\geq 5$, if
\[
\spt(\ell n-\delta_{\ell})\equiv 0\pmod {\ell} \text{, for every } n,
\]
then
\[
\Tr_{\ell+1}(n)\equiv 0\pmod {\ell} \text{, for every } n.
\]

Using a theorem of Garthwaite and Jameson \cite{zbMATH07282550}, we prove the following incongruence property for the $\spt$ function.
\begin{corollary}\label{coro_incon}
Suppose that $\ell\geq 5$ is prime and $t_0\in\{0,\ldots,\ell-1\}$. If $\left(\frac{1-24t_0}{\ell}\right)=-1$ and $\ell\nmid \spt(t_0)$, then for all $t\in\{0,\ldots,\ell-1\}$ such that 
\[
t\equiv t_0d^2+\frac{1-d^2}{24}\pmod{\ell},
\]
for some $d$ with $\gcd(d,6\ell)=1$, we have
\[
\sum_{n\geq 0}\spt(\ell n+t)q^n\not\equiv 0\pmod{\ell}.
\]
\end{corollary}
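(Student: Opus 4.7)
The strategy is to interpret $\sum_{n\geq 0}\spt(\ell n+t)q^n\pmod{\ell}$ as arithmetic-progression coefficients of a single half-integer weight modular form modulo $\ell$, whose non-vanishing on square-class orbits is exactly what the Garthwaite-Jameson theorem \cite{zbMATH07282550} delivers.

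The first step is to translate the orbit condition in the hypothesis into a statement about square classes of the partition discriminant. A direct computation shows that $t\equiv t_0 d^2+\frac{1-d^2}{24}\pmod{\ell}$ is equivalent to $1-24t\equiv d^2(1-24t_0)\pmod{\ell}$. Hence, as $d$ ranges over integers with $\gcd(d,6\ell)=1$, the orbit of $t_0$ under the map $t\mapsto t d^2+\frac{1-d^2}{24}$ is exactly the set of residues $t$ for which $1-24t$ lies in the square class of $1-24t_0$ modulo $\ell$. The hypothesis $\left(\frac{1-24t_0}{\ell}\right)=-1$ then identifies this orbit with the full set of $t\in\{0,\ldots,\ell-1\}$ for which $1-24t$ is a non-residue mod $\ell$.

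The second step is to build the relevant modular object mod $\ell$. By the Bringmann harmonic Maass form structure of $\spt$ recalled in Section \ref{section_weak-Harm}, the series $\sum_{n\geq 1}\spt(n)q^{24n-1}$ is the holomorphic part of a weight $3/2$ harmonic Maass form on $\Gamma_0(576)$ with a suitable multiplier. Applying Treneer's sieving technique (or equivalently the $\Theta$-operator together with a level-$\ell$ Atkin-Lehner twist) removes the non-holomorphic shadow modulo $\ell$ on non-residue classes and yields a weakly holomorphic half-integral weight modular form $\widetilde{\mathcal{F}}$ on a group of level coprime to $\ell$, whose Fourier coefficient at the index $24n-1$ agrees with $\spt(n)\pmod{\ell}$ whenever $1-24n$ is a non-residue mod $\ell$. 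The hypothesis $\ell\nmid\spt(t_0)$ then supplies a nonzero coefficient of $\widetilde{\mathcal{F}}$ at the non-residue index $24t_0-1\pmod{\ell}$.

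The third step invokes the Garthwaite-Jameson incongruence theorem: given such a half-integer weight modular form mod $\ell$ with a nonzero coefficient at a non-residue index $m_0\pmod{\ell}$, every coefficient indexed by $m\equiv m_0 d^2\pmod{\ell}$ with $\gcd(d,6\ell)=1$ is also nonzero mod $\ell$. Combined with the orbit identification from the first step, this immediately yields $\sum_{n\geq 0}\spt(\ell n+t)q^n\not\equiv 0\pmod{\ell}$ for every admissible $t$. I expect the main obstacle to be precisely the level-reduction in the second step: producing a classical modular form mod $\ell$, on a group of level prime to $\ell$, whose coefficients encode $\spt$ values on exactly the non-residue classes, since the Garthwaite-Jameson hypothesis requires level coprime to $\ell$. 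Once that reduction is carried out along the now-standard mock-modular mod-$\ell$ machinery, the remainder is a formal application of the orbit structure and the cited theorem.
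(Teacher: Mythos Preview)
Your overall architecture matches the paper's: identify the orbit condition as a square-class condition on $1-24t\pmod\ell$, produce a modular object mod $\ell$ whose coefficients on the non-residue class are the $\spt$ values, and then invoke Garthwaite--Jameson. Your first and third steps are fine and agree with the paper.

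The genuine difference is your second step. You propose passing through Bringmann's weight $3/2$ harmonic Maass form on $\Gamma_0(576)$, then using Treneer-type sieving or a $\Theta$-twist to kill the shadow mod $\ell$ on non-residue classes, and finally reducing the level back below $\ell$. You yourself flag this level-reduction as the main obstacle, and you are right that this is where the work lies; carrying it out would require citing or reproving a level-lowering statement tailored to this mock setting, and then checking that the resulting form sits inside whatever framework Garthwaite--Jameson actually state their theorem for.

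The paper bypasses this entirely. It uses the main recurrence (Theorem~\ref{theorem_main} via equation~\eqref{equation_congurence-S_l-1}) to see directly that
\[
\eta(z)\cdot\sum_{n\geq 0}\Bigl(\Bigl(\tfrac{1-24n}{\ell}\Bigr)-1\Bigr)\bigl(12\spt(n)+(24n-1)p(n)\bigr)q^{n-1/24}
\]
is congruent mod $\ell$ to a weight $\ell+1$ form on $\SL_2(\mathbb{Z})$, then strips off the $(24n-1)p(n)$ contribution using $\D^{(\ell+1)/2}(1/\eta)$ and $E_{\ell+1}/\eta$. This leaves $\eta$ times the twisted $\spt$-series as a level-$1$ form mod $\ell$, which is exactly the input Garthwaite--Jameson's Theorem~1 (with $B=-1$) is stated for. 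So the paper never leaves $\SL_2(\mathbb{Z})$ and never needs any level-reduction; the recurrence machinery of the paper does that work for free. Your route is in principle viable, but it reproves by general machinery something the paper gets immediately from its own results.
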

For example, choose $t_0=1$, then the above result implies that for all prime $\ell$ with $\left(\frac{-23}{\ell}\right)=-1$ and all $t$ satisfying
\[
t\equiv \frac{1+23d^2}{24}\pmod{\ell},~\gcd(d,6\ell)=1,
\]
(these $t$ range over all integers satisfying $\left(\frac{1-24t}{\ell}\right)=-1$) we have
\[
\sum_{n\geq 0}\spt(\ell n+t)q^n\not\equiv 0\pmod{\ell}.
\]

This paper is organized as follows. In Section \ref{section_weak-Harm}, we introduce the necessary notation and, in particular, recall the definition and basic properties of the harmonic Maass form. In Section \ref{subsection_holproj}, we explicitly compute the holomorphic projection of the harmonic Maass form associated with the spt function. In Section \ref{section_Maass-Poincare}, we construct the Maass-Poincar\'e series of weight $3/2$ and show that their holomorphic part is precisely the generating function for the spt function. The results stated in Section \ref{section_intro} are proved in Section \ref{section_proof}.

\section{Weak Harmonic Maass Forms}\label{section_weak-Harm}
In this section, we set notations and recall some fundamental facts regarding harmonic Maass forms. Let $f$ be a smooth function defined on the upper half-plane $\mathbb{H}$. For $\gamma=\left(\begin{smallmatrix}
a& b\\
c& d\\
\end{smallmatrix}\right)\in \operatorname{GL}^{+}_2(\mathbb{Q})$ and $k\in \frac{1}{2}\mathbb{Z}$, we define the weight $k$ slash operator $\big |_k$ by
\[
(f\big|_k\gamma)(z):=(\det \gamma)^{k/2}(cz+d)^{-k}f\left(\frac{az+b}{cz+d}\right).
\]
A smooth function $f:\mathbb{H}\rightarrow\mathbb{C}$ is called a weak harmonic Maass form of weight $k$ and multiplier $\chi$ for $\SL_2(\mathbb{Z})$ if it satisfies
\[
(f\big|_k\gamma)(z)=\chi(\gamma)f(z)\text{ for every }\gamma\in\SL_2(\mathbb{Z});
\]
and is annihilated by the weight $k$ hyperbolic Laplacian
\[
\Delta_k:=-y^2\left(\frac{\partial^2}{\partial x^2}+\frac{\partial^2}{\partial y^2}\right)+iky\left(\frac{\partial}{\partial x}+i\frac{\partial}{\partial y}\right),z=x+iy;
\]
and has the Fourier expansion of the form
\[
f(z)=\sum_{n\gg-\infty}c_f^{+}(n)q^{\frac{n}{m}}+c_f(0)y^{1-k}+\sum_{-n\ll\infty}c_f^{-}(-n)\Gamma\left(1-k,4\pi \frac{n}{m}y\right)q^{-\frac{n}{m}},q=e^{2\pi iz},
\]
where $\Gamma(1-k,y)$ denotes the incomplete gamma function and $m$ is defined by the identity $\chi\left(\left(\begin{smallmatrix}
1&1\\
0&1\\
\end{smallmatrix}\right)\right)=e\left(\frac{\alpha}{m}\right)$, $e(x):=e^{2\pi i x}$, $\alpha$ and $m$ co-prime. We denote the space of functions satisfying these properties by $H_k^{!}(\chi)$. We adopt the customary notation  $M_k^{!}(\chi),M_k(\chi),S_k(\chi)$ to denote the spaces of weakly holomorphic modular forms, modular forms and cusp forms, respectively. When $\chi$ is the trivial character, we omit it from the notation.

The Rankin-Cohen bracket for two functions $f$ and $g$, is defined by
\begin{equation}\label{equation_def-RC-bracket}
[f,g]_{v}:=\sum_{0\leq j\leq v} (-1)^j \binom{k+v-1}{v-j} \binom{l+v-1}{j}\D^jf\D^{v-j}g,
\end{equation}
where $\D f:=\frac{1}{2\pi i}\frac{df}{dz}=q\frac{df}{dq}$. It is known that (see \cite[Theorem 7.1]{zbMATH03485913})
\[
[f\big|_k\gamma,g\big|_l\gamma]_{v}=[f,g]_{v}\big|_{k+l+2v}\gamma,~\forall \gamma\in\SL_2(\mathbb{Z}).
\]
Thus if $f$ and $g$ transform like  modular forms of weights $k$ and $l$ with multiplier systems $\chi_1$ and $\chi_2$, respectively, then $[f,g]_{v}$ transforms like a modular form of weight $k+l+2v$ with multiplier system $\chi_1\chi_2$.

In this paper, we are interested in the multiplier system $\varepsilon$ which is attached to the Dedekind eta function $\eta$, defined by
\[
\eta(z)=q^{\frac{1}{24}}\prod_{n\geq 1}(1-q^n)=\sum_{n\geq 1}\chi_{12}(n)q^{\frac{n^2}{24}},
\]
where $\chi_{12}(n):=\left(\frac{12}{n}\right)$. Dedekind eta function has the transform property
\[
(\eta\big|_{1/2}\gamma)(z)=\varepsilon(\gamma)\eta(z),\forall \gamma\in \SL_2(\mathbb{Z}),
\]
which means $\eta\in M_{1/2}(\varepsilon)$. It is well-known that the eta multiplier $\varepsilon$ satisfies $\varepsilon\left(\left(\begin{smallmatrix}
1&1\\
0&1\\
\end{smallmatrix}\right)\right)=e(\frac{1}{24})$ and for $\gamma=\left(\begin{smallmatrix}
a& b\\
c& d\\
\end{smallmatrix}\right)$ with $c>0$,
\begin{equation}\label{equation_def_eta_multiper}
\varepsilon(\gamma)=\sqrt{-i}e^{-\pi is(d,c)}e\left(\frac{a+d}{24c}\right),
\end{equation}
where $s(d,c)$ is the Dedekind sum, see \cite[71.21]{zbMATH03399320}. 

Bringmann \cite{zbMATH05317179} showed that the generating function of the $\spt$ function, which is the primary focus of this paper, belongs to $H^{!}_{3/2}(\overline{\varepsilon})$ when supplemented with its non-holomorphic part, see also \cite[Theorem 2.1]{zbMATH06191432}. 
Let 
\begin{equation}\label{equation_def-alpha}
\alpha(z):=-\sum_{-1\leq n\equiv -1\bmod 24}\left(12\spt\left(\frac{n+1}{24}\right)+np\left(\frac{n+1}{24}\right)\right)q^{\frac{n}{24}},
\end{equation}
then the function
\begin{equation}\label{equation_genofspt}
\begin{aligned}
F(z)&=\alpha(z)+\frac{6\sqrt{3}i}{\pi}\int_{-\overline{z}}^{i\infty}\frac{\eta(w)}{(-i(w+z))^{3/2}}dw\\
&=\alpha(z)-\frac{3}{\sqrt{\pi}}\sum_{0>n\equiv -1\bmod 24}\chi_{12}(\sqrt{-n})|n|^{1/2}\Gamma\left(-\frac{1}{2},-\frac{\pi ny}{6}\right)q^{\frac{n}{24}},
\end{aligned}
\end{equation}
where $\chi_{12}(\sqrt{-n})$ is defined to be zero if $-n$ is not a square, belongs to the space $H^{!}_{3/2}(\overline{\varepsilon})$.

\section{Holomorphic Projection}\label{subsection_holproj}
As noted in the preceding section, our generating function for the spt function is a non-holomorphic modular form. In order to bring its analysis into the realm of holomorphic modular forms, we apply the method of holomorphic projection. Suppose we have a weight $k$ real-analytic modular form $\tilde{f}$ with moderate growth at the cusps. Then this function defines a linear functional on the space of weight $k$ holomorphic cusp forms by $g\rightarrow \langle g,\tilde{f}\rangle$, this functional must be given by $\langle \cdot,f\rangle$ for a holomorphic cusp form $f$ of the same weight. This $f$ is essentially the
 holomorphic projection of $\tilde{f}$, which we defined as $\pi_{\hol}(\tilde{f})$. We recall some basic properties about holomorphic projection (see \cite[10.1]{zbMATH06828732}). 
\begin{proposition}\label{prop_holoproj}
Let $f:\mathbb{H}\rightarrow \mathbb{C}$ be a smooth function  transforming like a modular form 
of weight  $k \geq 2$ on $\SL_2(\mathbb{Z})$ with Fourier expansion
\[
f(z)=\sum_{n\in \mathbb{Z}}a_f(n,y)q^n,
\]
and assume that $f$ satisfies the following:
\begin{enumerate}[\rm (i).]
\item  $f(z)=c_0+O(y^{-\delta})$ for some $\delta>0$ as $y\rightarrow \infty$.

\item  $a_f(n,y)=O(y^{1-k+\epsilon})$ for some $\epsilon>0$ as $y\rightarrow \infty$ for all $n$.
\end{enumerate}
The following are true.
\begin{enumerate}[\rm (1).]
\item  $\pi_{\hol}(\tilde{f})=c_0+\sum_{n\geq 1}c(n)q^n,$
where 
\[
c(n)=\frac{(4\pi n)^{k-1}}{\Gamma(k-1)}\int_0^{\infty}a_f(n,y)e^{-4\pi ny}y^{k-2}dy.
\]

\item  If $k=2$, $\pi_{\hol}(f)$ is a quasi-modular form and for $k\geq 4$, $\pi_{\hol}(f)$ is a modular form.

\item  $\langle f,g\rangle=\langle \pi_{\hol}(f),g\rangle$ for every $g\in S_k$.
\end{enumerate}
\end{proposition}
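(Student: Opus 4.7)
My plan is to prove part (1) by the classical ``test against Poincar\'e series'' technique. For each $n\geq 1$ and $k\geq 4$, the $n$-th holomorphic Poincar\'e series
$$
P_{n,k}(z) := \sum_{\gamma\in \Gamma_\infty\backslash \SL_2(\mathbb{Z})} (q^n)\big|_k\gamma
$$
converges absolutely, lies in $S_k$, and has the standard reproducing property $\langle g, P_{n,k}\rangle = \tfrac{\Gamma(k-1)}{(4\pi n)^{k-1}} b_g(n)$ for every $g = \sum b_g(m)q^m \in S_k$. Writing $\pi_{\hol}(f) = c_0 + \sum_{n\geq 1} c(n)q^n$, I would invoke the defining identity $\langle P_{n,k}, f\rangle = \langle P_{n,k}, \pi_{\hol}(f)\rangle$ and unfold the left-hand integral against the cosets $\Gamma_\infty\backslash \SL_2(\mathbb{Z})$, collapsing the fundamental domain to the vertical strip $\{0<x<1,\,y>0\}$:
$$
\langle P_{n,k}, f\rangle = \int_0^\infty\!\!\int_0^1 f(z)\,\overline{e^{2\pi i n z}}\,y^{k-2}\,dx\,dy = \int_0^\infty a_f(n,y)\,e^{-4\pi n y}\,y^{k-2}\,dy,
$$
where the inner $x$-integration extracts the $n$-th Fourier coefficient of $f$. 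Equating with $\tfrac{\Gamma(k-1)}{(4\pi n)^{k-1}} c(n)$ then yields the stated formula. Hypothesis (ii) is precisely what makes the $y$-integral converge at $y=\infty$, since $e^{-4\pi n y}$ dominates the polynomial growth $y^{1-k+\epsilon}$, while hypothesis (i) ensures that $\langle g, f\rangle$ converges for $g\in S_k$ in the first place and pins down the constant term $c_0$.

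For part (2) with $k\geq 4$, the linear functional $g\mapsto \langle g, f\rangle$ on the finite-dimensional Hilbert space $S_k$ is represented by a unique cusp form whose Fourier coefficients are exactly those produced in part (1); adjoining $c_0$ gives an element of $M_k$. When $k=2$ the space $S_2(\SL_2(\mathbb{Z}))$ is zero, so no genuine cusp form can appear, but the integral formula for $c(n)$ still defines a formal $q$-series, which becomes quasi-modular after the usual Eisenstein-type correction involving $E_2$. This correction is produced by Hecke's trick of inserting an auxiliary convergence factor $y^s$ into the Poincar\'e series and analytically continuing to $s=0$. Part (3) is tautological from the construction: by design $\pi_{\hol}(f)$ represents the restriction of the functional $\langle\cdot, f\rangle$ to $S_k$, so the two pairings with any $g\in S_k$ agree.

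The main obstacle I anticipate is the $k=2$ regularization. For $k\geq 3$ the Poincar\'e series converges absolutely and the unfolding computation above is routine, but at $k=2$ absolute convergence fails, and one must invoke Hecke's analytic continuation trick to extract the formal Fourier coefficients and to justify the quasi-modularity. This extra subtlety is precisely what accounts for the weakening of the conclusion in (2) from ``modular'' to ``quasi-modular'' at the borderline weight; it also explains why growth hypothesis (ii) is stated with a strict $\epsilon>0$ slack rather than $\epsilon=0$, which is what one needs to push the $y^s$-regularized integral through the analytic continuation without picking up a boundary contribution.
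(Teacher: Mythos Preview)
The paper does not actually prove this proposition: it is stated as a recalled fact with a reference to \cite[10.1]{zbMATH06828732} (the Bringmann--Folsom--Ono--Rolen monograph) and no argument is given in the present paper. So there is nothing to compare against in the literal sense.

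That said, your outline is the standard textbook argument and is essentially what one finds in the cited reference. Two small points. First, a notational slip: the integral you write down is $\langle f, P_{n,k}\rangle$, not $\langle P_{n,k}, f\rangle$, since the Petersson pairing in this paper has the complex conjugate on the \emph{second} argument; this is harmless but worth cleaning up. Second, in the paper's formulation $\pi_{\hol}$ is \emph{defined} as the Riesz representative of $g\mapsto \langle g,\tilde f\rangle$ on $S_k$ (plus the constant $c_0$), so part (3) really is a tautology and part (1) is the computation of that representative's Fourier coefficients via the Poincar\'e reproducing kernel, exactly as you say. Your remarks on the $k=2$ regularization via Hecke's trick are accurate and account correctly for the ``quasi-modular'' conclusion in (2).
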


For $k\geq 1$, we recall the mock Eisenstein series as
\[
F_k:=\sum_{n\neq 0}(-1)^n\left(\frac{-3}{n-1}\right)n^{k-1}\frac{q^{n(n+1)/6}}{1-q^n}=-\sum_{r>s>0}	\left(\frac{12}{r^2-s^2}\right)s^{k-1}q^{rs/6}.
\]
The following result was already stated (without proof) by Zagier \cite{zbMATH05718024} and given a brief proof (without explicit computation) by Ahlgren and Kim \cite{zbMATH06865877}. For the sake of completeness, we provide the detailed computation here.
\begin{proposition}\label{prop_holoprojofspt}
Let $F$ be the function defined in \textnormal{(\ref{equation_genofspt})}, then
\[
\pi_{\hol}(F\eta)=\alpha\eta+12F_{2}=E_2,
\]
and for $v\geq 1$,
\[
\pi_{\hol}([F,\eta]_v)=[\alpha,\eta]_v+24^{-v}\binom{2v}{v}\cdot 12F_{2v+2}\in M_{2v+2}.
\]
\end{proposition}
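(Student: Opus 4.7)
The plan is to apply Proposition \ref{prop_holoproj}(1) after decomposing \(F=\alpha+F^-\) into its holomorphic part \(\alpha\) and its non-holomorphic incomplete-gamma part \(F^-\). By bilinearity the Rankin-Cohen bracket splits as \([F,\eta]_v=[\alpha,\eta]_v+[F^-,\eta]_v\); since \([\alpha,\eta]_v\) is already holomorphic, it projects to itself, so the heart of the proof is to show
\[
\pi_{\hol}\!\bigl([F^-,\eta]_v\bigr)=24^{-v}\binom{2v}{v}\cdot 12\,F_{2v+2}.
\]

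First I would write out the Fourier expansions
\[
F^-(z)=-\frac{3}{\sqrt{\pi}}\sum_{r\geq 1}\chi_{12}(r)\,r\,\Gamma\!\left(-\tfrac12,\tfrac{\pi r^2 y}{6}\right)q^{-r^2/24},\qquad \eta(z)=\sum_{m\geq 1}\chi_{12}(m)\,q^{m^2/24},
\]
and then compute \(D^j F^-\) using \(D=(2\pi i)^{-1}\partial_z\). Because \(\partial_z y=1/(2i)\), each differentiation produces both a ``\(q\)-derivative'' piece and a piece from \(\partial_y\Gamma(-\tfrac12,\tfrac{\pi r^2 y}{6})\); using the reduction \(\Gamma(-\tfrac12,x)=2x^{-1/2}e^{-x}-2\sqrt{\pi}\,\operatorname{erfc}(\sqrt{x})\), every summand is put into a form ready for integration. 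Assembling these with the polynomial factors from \(D^{v-j}\eta\) and the Rankin-Cohen weights \((-1)^j\binom{v+1/2}{v-j}\binom{v-1/2}{j}\), the \(n\)-th Fourier coefficient of \([F^-,\eta]_v\) becomes a finite sum indexed by pairs \((m,r)\) with \(m^2-r^2=24n\).

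Next I would insert this into the integral formula of Proposition \ref{prop_holoproj}(1), exploiting the crucial identity
\[
\tfrac{\pi r^2}{6}+4\pi n=\tfrac{\pi m^2}{6},
\]
so that each integrand reduces, via integration by parts together with \(\int_0^\infty y^{s-1}e^{-by}\,dy=\Gamma(s)b^{-s}\) and the closed-form expression for \(\int_0^\infty y^{s-1}\operatorname{erfc}(\sqrt{ay})e^{-by}\,dy\), to a rational expression in \(r\) and \(m\). The substitution \(\mu=(m+r)/2\), \(\nu=(m-r)/2\) is valid because \(m,r\) are both coprime to \(6\) and hence odd; it produces \(\mu\nu=6n\) and \(\chi_{12}(mr)=\chi_{12}(\mu^2-\nu^2)\), and the sum collapses into
\[
24^{-v}\binom{2v}{v}\cdot 12\Bigl(-\sum_{\substack{\mu>\nu>0\\ \mu\nu=6n}}\chi_{12}(\mu^2-\nu^2)\,\nu^{2v+1}\Bigr),
\]
which is exactly the \(n\)-th Fourier coefficient of \(24^{-v}\binom{2v}{v}\cdot 12F_{2v+2}\).

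Finally, for the case \(v=0\), Proposition \ref{prop_holoproj}(2) ensures that \(\pi_{\hol}(F\eta)=\alpha\eta+12F_2\) is a quasi-modular form of weight \(2\) on \(\SL_2(\mathbb{Z})\); since that space is one-dimensional, spanned by \(E_2\), and a direct computation of the constant term gives \(1\), we conclude \(\alpha\eta+12F_2=E_2\). The main obstacle will be the combinatorial bookkeeping in the Rankin-Cohen expansion: the summands \(D^j F^-\) come in several different shapes, and after integration one must verify that the binomial factors collapse into the single clean constant \(24^{-v}\binom{2v}{v}\). The appearance of the Chebyshev polynomial \(U_{2v}\) in (\ref{equation_q-and-chebyshv}) is strongly suggestive that this collapse is best organized by recognizing the relevant finite sum as the value of \(U_{2v}\) at a distinguished argument.
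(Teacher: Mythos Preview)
Your overall strategy coincides with the paper's: split $F=\alpha+F^-$, use bilinearity of the bracket, and reduce to computing $\pi_{\hol}([F^-,\eta]_v)$; the final change of variables $\mu=(m+r)/2$, $\nu=(m-r)/2$ and the identification with the coefficients of $F_{2v+2}$ are exactly what the paper does at the end. The one genuine difference is how the projection of the non-holomorphic piece is obtained. The paper does not carry out the integral in Proposition~\ref{prop_holoproj}(1) by hand; instead it quotes a general identity of Mertens (combining Theorem~4.6 and Proposition~5.3 of \cite{zbMATH06620622}) which, for $f^-=\sum c_f(n)n^{1/2}\Gamma(-\tfrac12,4\pi ny)q^{-n}$ and holomorphic $g=\sum a_g(n)q^n$, gives the closed formula
\[
\pi_{\hol}([f^-,g]_v)=2^{-2v+1}\binom{2v}{v}\sqrt{\pi}\sum_{N\geq 1}\sum_{m-n=N}(m^{1/2}-n^{1/2})^{2v+1}a_g(m)c_f(n)\,q^N,
\]
applied after the rescaling $z\mapsto 24z$ (so that all exponents are integral and Mertens' hypotheses are met). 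Your plan amounts to reproving this special case of Mertens' theorem from scratch via the explicit differentiation of $\Gamma(-\tfrac12,\cdot)$ and term-by-term integration. That is legitimate, and your observation that the collapse of the Rankin--Cohen binomials to the single constant $24^{-v}\binom{2v}{v}$ is the combinatorial crux is exactly right---that collapse is precisely the content of Mertens' identity, and can indeed be organised via the Chebyshev-type relation you allude to. The trade-off is clear: the paper's route is a two-line citation once one accepts Mertens' result, whereas your route is self-contained but requires carrying several families of integrals (from the successive $y$-derivatives of the incomplete gamma) through the projection formula and then recombining them. Both yield the same endpoint; neither has a gap.
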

\begin{proof}
Expressing $F$ as $\alpha+NH$, the holomorphic projection of $[F,\eta]_v$ takes the form 
\[
[\alpha(z),\eta(z)]_v+\pi_{\hol}([NH(z),\eta(z)]_v).
\]
To evaluate $\pi_{\hol}([NH(z),\eta(z)]_v)$, we make use of a result of Mertens \cite{zbMATH06620622}, which combines Theorem 4.6 and Proposition 5.3 of \cite{zbMATH06620622}. We state it as follows: 
\[
\text{ If }f^{-}=\sum_{n\geq 0}c_{f}(n)n^{\frac{1}{2}}\Gamma(-\tfrac{1}{2},4\pi ny)q^{-n} \text{ and } g=\sum_{n\geq 0}a_g(n)q^n,
\]
then
\[
\pi_{\hol}([f^{-},g]_v)= 2^{-2v+1}\binom{2v}{v}\sqrt{\pi}\sum_{N\geq 1}\sum_{m-n=N}(m^{\frac{1}{2}}-n^{\frac{1}{2}})^{2v+1}a_g(m)c_f(n)q^N.
\]
In order to align with the assumptions of the aforementioned result, we proceed to evaluate $\pi_{\hol}([NH(24z),\eta(24z)]_v)$. Using the fact that
\[
a_g(m)=\chi_{12}(\sqrt{m}) \text{ and }c_f(n)=-\frac{3}{\sqrt{\pi}}\chi_{12}(\sqrt{n}),
\]
for $g=\eta(24z)$ and $f=NH(24z)$, we obtain
\[
\pi_{\hol}([NH(24z),\eta(24z)]_v)=-3\cdot 2^{-2v+1}\binom{2v}{v}\sum_{N\geq 1}\sum_{m^2-n^2=N}(m-n)^{2v+1}\left(\frac{12}{mn}\right)q^N.
\]
In the above identity, replace $(m+n)/2$ by $r$ and $(m-n)/2$ by $s$. The right-hand side then becomes $ \binom{2v}{v}12\cdot F_{2v+2}(24z)$. Finally, observing that
\[
[NH(24z),\eta(24z)]_v=24^{-v}\left([NH,\eta]_v\right)(24z),
\]
we complete the proof of the result.
\end{proof}

\section{Maass-Poincar\'e series with eta multiplier}\label{section_Maass-Poincare}
 To define the Maass-Poincar\'e series, we first recall definitions of the Whittaker functions $M_{\mu,\nu}(y)$ and $W_{\mu,\nu}(y)$, they are linearly independent solutions of the differential equation
\begin{equation}\label{equation_Whittaker_equation}
\frac{d^2w}{dy^2}+\left(-\frac{1}{4}+\frac{\mu}{y}+\frac{\frac{1}{4}-\nu^2}{y^2}\right)w=0.
\end{equation}
At the special point $s=k/2$, we have
\[
y^{-\frac{k}{2}}M_{\sgn(n)\tfrac k2,\tfrac{k-1}{2}}(y)=e^{-\sgn(n)\frac{y}{2}},
\]
and
\begin{equation}\label{equtaion_W-function-special}
y^{-\frac{k}{2}}W_{\sgn(n)\tfrac k2,\tfrac{k-1}{2}}(y)=\begin{dcases}
e^{-\frac{y}{2}}	&\text{ if } n>0, \\ 
\Gamma(1-k,y)e^{\frac{y}{2}}	&\text{ if } n<0.
\end{dcases}
\end{equation}
For $m\equiv -1\pmod {24}$ and $s\in\mathbb{C}$, define
\[
\varphi_{m,s}(z):=\left(4\pi |m|\frac{y}{24}\right)^{-\frac{k}{2}}M_{\sgn(m)\frac{k}{2},s-\frac{1}{2}}\left(4\pi |m|\frac{y}{24}\right)e\left(\frac{mx}{24}\right).
\]
It follows from (\ref{equation_Whittaker_equation}) that $\varphi_{m,s}$ is an eigenfunction of the weight $k$ hyperbolic Laplacian:
\begin{equation}\label{equation_laplace-of-phi}
\Delta_k\varphi_{m,s}(z)=\left(s-\frac{k}{2}\right)\left(1-\frac{k}{2}-s\right)\varphi_{m,s}(z),
\end{equation}
and due to the asymptotic behaviour of the Whittaker function, we have
\[
\varphi_{m,s}(z)=O(y^{\operatorname{Re}(s)-k/2}), \text{ as }y\rightarrow 0.
\]
In this subsection, we focus primarily on functions in $H_{k}(\overline{\varepsilon})$. The space $H_{k}(\overline{\varepsilon})$ is trivial except when $2k\equiv 3\pmod4$. Throughout the remainder of the paper, we therefore assume that $2k\equiv 3\pmod4$. We now define the Maass-Poincar\'e series with multiplier $\overline{\varepsilon}$ as 
\begin{equation}\label{equat_def-of-poincare}
P_{m,k,s}(z)=\sum_{\gamma\in \Gamma_{\infty}\setminus \SL_2(\mathbb{Z})}\varepsilon(\gamma)\varphi_{m,s}\big|_k\gamma.
\end{equation}
When $\Re(s)>1$, the series defined above converges absolutely and uniformly on
compact subsets of $\mathbb{H}$. To compute their Fourier coefficients, we first make some notational conventions. Define the Kloosterman sum as 
\[
K(m,n;c):=\sum_{d(c^{*})}e^{\pi is(d,c)}e\left(\frac{\overline{d}m+dn}{c}\right).
\]
Here $c^{*}$ indicates that the sum is restricted to residue classes coprime to $c$, and $\overline{d}$ denotes the inverse of $d$ modulo $c$. Let $J_\alpha(x)$ and $I_\alpha(x)$ be the usual Bessel functions.
\begin{proposition}\label{prop_fourier-poincare}
Let $k$ and $P_{m,k,s}(z)$ be as above, and suppose that $m\equiv -1\pmod{24}$ and $\textup{Re}(s)>1$.  Then we have
\[
P_{m,k,s}(z)= \varphi_{m,s}(z) + \sum_{n \equiv -1(24)} g_{m,n}(s)L_{m,n}(s)\left(4\pi |m|\frac{y}{24}\right)^{-\frac k2} W_{\sgn(n)\frac k2,s-\frac 12}\left(4\pi |n| \frac{y}{24}\right) e\left(\frac{nx}{24}\right),
\]
where
\[
g_{m,n}(s) := \frac{2\pi i^{-k-\frac 12} \Gamma(2s) \sqrt{|m/n|} }{\Gamma\left(s+\sgn(n)\frac k2\right)}
\]
and 
\[ 
	L_{m,n}(s) := 
		\begin{dcases}
			\sum\limits_{c>0} \frac{K(-m',-n';c)}{c} J_{2s-1}\left(\frac{\pi\sqrt{|mn|}}{6c}\right) &\text{ if } mn>0, \\ 
			\sum\limits_{c>0} \frac{K(-m',-n';c)}{c} I_{2s-1}\left(\frac{\pi\sqrt{|mn|}}{6c}\right) &\text{ if } mn<0,
		\end{dcases}
\]
where $m=24m'-1$ and $n=24n'-1$.
\end{proposition}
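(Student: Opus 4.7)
The plan is to follow the classical unfolding approach for Poincaré series, adapted to the half-integral weight and the eta multiplier. First I would split the sum (\ref{equat_def-of-poincare}) according to the bottom row $(c,d)$ of coset representatives for $\Gamma_\infty\backslash \SL_2(\mathbb{Z})$. The unique coset with $c=0$ contributes $\varphi_{m,s}(z)$ itself, which is the first term of the claimed expansion. Thus it suffices to show that the sum over $c>0$ has the stated Fourier expansion in the variable $x$.

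For $c>0$, I would write $\gamma=\bigl(\begin{smallmatrix}a&b\\c&d\end{smallmatrix}\bigr)$ and use $\gamma z=\tfrac{a}{c}-\tfrac{1}{c^2(z+d/c)}$, so that $\varphi_{m,s}\big|_k\gamma$ depends on $d$ only through $a/c \pmod{1}$ and a translation in the $x$-variable. Combined with the explicit formula (\ref{equation_def_eta_multiper}) for the eta multiplier, the sum over $d \pmod c$ with $\gcd(d,c)=1$ assembles precisely into the Kloosterman sum $K(-m',-n';c)$ (with $m=24m'-1$, $n=24n'-1$) after one reindexes via the congruences mod $24$ that select the relevant $n$. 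The computation of the Fourier coefficient of $e(nx/24)$ in the remaining sum then reduces, after the usual change of variables translating $x$ by $d/c$ and swapping summation and integration, to the integral
\[
\int_{-\infty}^\infty \left(\tfrac{y}{u^2+y^2}\right)^{k/2}M_{\sgn(m)\tfrac k2,\,s-\tfrac12}\!\left(\tfrac{4\pi|m|}{24}\cdot\tfrac{y}{u^2+y^2}\right) e\!\left(-\tfrac{mu/(u^2+y^2)+nu}{24c^2\cdot(\text{stuff})}\right)du,
\]
taken over one period in $x=-d/c+u$.

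The third step is to evaluate this integral in closed form. The standard formula (see, e.g., the computations in Bringmann–Ono or Bruinier–Funke) expressing the integral of $\varphi_{m,s}$ along a horizontal line of a shifted inverted half-plane yields a $W$-Whittaker function times a Bessel function, with the sign of $mn$ dictating the case split: the oscillatory regime $mn>0$ produces $J_{2s-1}$, while $mn<0$ gives $I_{2s-1}$. The exact constant $g_{m,n}(s)=2\pi i^{-k-1/2}\Gamma(2s)\sqrt{|m/n|}/\Gamma(s+\sgn(n)k/2)$ falls out of the ratio of Gamma factors in the Whittaker integral representation, combined with the $i^{-k-1/2}$ coming from the power of $(cz+d)^{-k}$ under the slash action at half-integral weight $k$ with $2k\equiv 3\pmod 4$.

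The main obstacle is bookkeeping rather than conceptual: one must track the $24$'s correctly (since the eta multiplier forces $m,n\equiv -1\pmod{24}$ and dilates the natural variable to $y/24$), verify absolute convergence on $\Re(s)>1$ to justify interchanging the sum over $c$ with the integral, and correctly match the $W$-Whittaker side to $\Gamma\bigl(1-k,4\pi|n|y/(24)\bigr)$ via (\ref{equtaion_W-function-special}). Once these normalizations are aligned, summing the $c$-dependent factors over $c>0$ produces exactly $L_{m,n}(s)$, completing the identification of the Fourier expansion with the form claimed in the proposition.
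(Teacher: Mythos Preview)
Your proposal is correct and follows essentially the same approach as the paper: both isolate the $c=0$ coset to give $\varphi_{m,s}(z)$, compute the Fourier coefficients of the remainder by integrating over a period in $x$, assemble the sum over $d$ modulo $c$ (via the explicit eta multiplier (\ref{equation_def_eta_multiper})) into the Kloosterman sum $K(-m',-n';c)$, and then evaluate the remaining Whittaker-type integral in $x$ using a standard tabulated formula. The paper is in fact terser than you are---it simply writes down the expression for $a(n,y)$, identifies the Kloosterman sum, and for the integral evaluation defers entirely to \cite[Proposition~8]{zbMATH06516390}---so your outline, with its explicit discussion of the $J$/$I$ case split, the origin of the factor $i^{-k-1/2}$, and the bookkeeping of the factor $24$, is if anything more detailed than what the paper provides.
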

\begin{proof}
As the function $P_m(z,s)-\varphi_{m,s}(z)$ has polynomial growth as $y\rightarrow\infty$, and $e(\tfrac{x}{24})(P_m(z,s)-\varphi_{m,s}(z))$ is periodic with period 1, we have the Fourier expansion 
\[
e\left(\frac{x}{24}\right)(P_m(z,s)-\varphi_{m,s}(z))=\sum_{n\in\mathbb{Z}}a(n,y)e(nx)
\]
with
\[
a(n,y)=\int_0^{\infty}e\left(\frac{x}{24}\right)(P_m(z,s)-\varphi_{m,s}(z))e(-nx)dx.
\]
The Fourier coefficients can be computed as
\begin{align*}
a(n,y)=&\sum_{\substack{c>0\\d(c^{*})}}\frac{\varepsilon(\gamma)}{c^k}e\left(\frac{\overline{d}m+d(24n-1)}{24c}\right)\\
&\times\int_{-\infty}^{+\infty}z^{-k}\varphi_{m,s}\left(\frac{y}{24c^2|z|^2}\right)e\left(-\frac{mx}{24c^2|z|^2}-(n-\tfrac1{24})x\right)dx.
\end{align*}
Write $m=24m'-1$, and by (\ref{equation_def_eta_multiper}) we see 
\[
\sum_{\substack{c>0\\d(c^{*})}}\varepsilon(\gamma)e\left(\frac{\overline{d}m+d(24n-1)}{24c}\right)=\sqrt{-i}K(-m',-n,c),
\]
since the Kloosterman sum is real. The calculation of the integrals in the Fourier series is standard, following the standard method in \cite[Proposition 8]{zbMATH06516390}.
\end{proof}
In the main case we care about ($k=\frac 32, s=\frac 43$), the series (\ref{equat_def-of-poincare}) no longer converges. However we show in the following that they admit an analytic continuation at $s=3/4$.
\begin{proposition}\label{prop_poincare-analytic}
For every $m<0$ with $m\equiv-1\pmod{24}$, the function $P_{m,\frac{3}{2},s}$ admits an analytic continuation at $s=3/4$. If $-m$ is not a square, then $P_{m,\frac{3}{2},\frac{3}{4}}\in M^{!}_{3/2}(\overline{\varepsilon})$ and if $-m$ is a square, then $P_{m,\frac{3}{2},\frac{3}{4}}\in H^{!}_{3/2}(\overline{\varepsilon})$ and 
\[
\chi_{12}(\sqrt{-m})P_{m,\frac{3}{2},\frac{3}{4}}-P_{-1,\frac{3}{2},\frac{3}{4}}\in M^{!}_{3/2}(\overline{\varepsilon}).
\]
\end{proposition}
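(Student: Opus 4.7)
The approach is to build $P_{m,\frac{3}{2},\frac{3}{4}}$ by analytic continuation of the Fourier expansion supplied by Proposition \ref{prop_fourier-poincare}, then identify the space by analyzing the principal part at the cusp and the non-holomorphic Fourier coefficients. The key preliminary observation is that, by (\ref{equation_laplace-of-phi}), the Laplace eigenvalue $(s-\tfrac{k}{2})(1-\tfrac{k}{2}-s)$ vanishes at $(k,s)=(\tfrac{3}{2},\tfrac{3}{4})$, so if a continuation exists it is automatically annihilated by $\Delta_{3/2}$ and hence harmonic.

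The central technical step is the analytic continuation of the Kloosterman-Bessel series $L_{m,n}(s)$ to $s=\tfrac{3}{4}$ (the gamma factor $g_{m,n}(s)$ in Proposition \ref{prop_fourier-poincare} is already meromorphic). The plan is to use the classical Selberg-Goldfeld spectral method: one writes the weighted sum of Kloosterman sums via the Petersson trace formula in terms of spectral data for the weight-$\tfrac{3}{2}$ Laplacian on $\SL_2(\mathbb{Z})\backslash\mathbb{H}$ with multiplier $\overline{\varepsilon}$, and this representation supplies the meromorphic continuation past the abscissa of absolute convergence. Equivalently, one can exploit the Sali\'e-type evaluation of the half-integer-weight Kloosterman sums $K(-m',-n';c)$ available for $m,n\equiv -1\pmod{24}$, which rewrites them as explicit exponential sums and yields convergence at $s=\tfrac{3}{4}$ by a direct harmonic-analytic argument. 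Once the continuation is secured, $P_{m,\frac{3}{2},\frac{3}{4}}$ transforms of weight $\tfrac{3}{2}$ with multiplier $\overline{\varepsilon}$, is harmonic, and by the specializations (\ref{equtaion_W-function-special}) has principal part $q^{m/24}$ at infinity together with a holomorphic part (indexed by $n>0$) and a non-holomorphic part (indexed by $n<0$) involving $\Gamma(-\tfrac{1}{2},\cdot)$; thus a priori $P_{m,\frac{3}{2},\frac{3}{4}}\in H^!_{3/2}(\overline{\varepsilon})$.

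The remaining task is to decide when the non-holomorphic coefficients vanish. For these coefficients ($mn>0$ since $m<0$), the Sali\'e evaluation of $K(-m',-n';c)$ produces a factor $\chi_{12}(\sqrt{-m})\chi_{12}(\sqrt{-n})$, which is zero unless both $-m$ and $-n$ are perfect squares (and then it equals $\pm 1$, since $m\equiv -1\pmod{24}$ forces $\sqrt{-m}$ coprime to $6$). Hence if $-m$ is not a square, every non-holomorphic coefficient vanishes and $P_{m,\frac{3}{2},\frac{3}{4}}\in M^!_{3/2}(\overline{\varepsilon})$; if $-m$ is a square, the factor $\chi_{12}(\sqrt{-m})$ appears uniformly in every non-holomorphic coefficient, so forming $\chi_{12}(\sqrt{-m})P_{m,\frac{3}{2},\frac{3}{4}}-P_{-1,\frac{3}{2},\frac{3}{4}}$ cancels the non-holomorphic part and yields a weakly holomorphic modular form. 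The main obstacle is the analytic continuation at $s=\tfrac{3}{4}$ itself, which lies on the boundary of absolute convergence of (\ref{equat_def-of-poincare}); Weil bounds on Kloosterman sums fall just short of convergence there, and either the spectral machinery of the Petersson formula or the Sali\'e simplification must be deployed carefully to push the continuation through.
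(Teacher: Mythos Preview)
Your overall strategy---continue the Fourier expansion of Proposition~\ref{prop_fourier-poincare} to $s=\tfrac34$ and then read off membership in $M^!_{3/2}(\overline{\varepsilon})$ versus $H^!_{3/2}(\overline{\varepsilon})$ from the non-holomorphic coefficients---is exactly the paper's. But the mechanism you give for the last step is incorrect, and if carried through consistently would yield the wrong conclusion.

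The individual Kloosterman sums $K(-m',-n';c)$ carry the Dedekind-sum twist $e^{\pi i s(d,c)}$; they are not Sali\'e sums, and they do \emph{not} evaluate with a global factor $\chi_{12}(\sqrt{-m})\chi_{12}(\sqrt{-n})$, nor do they vanish when $-m$ or $-n$ fails to be a square. More importantly, for $n<0$ the prefactor $g_{m,n}(s)$ in Proposition~\ref{prop_fourier-poincare} contains $\Gamma(s+\sgn(n)\tfrac34)^{-1}=\Gamma(s-\tfrac34)^{-1}$, which has a \emph{zero} at $s=\tfrac34$. If $L_{m,n}(s)$ were merely finite at $s=\tfrac34$ (as your Sali\'e argument suggests), every non-holomorphic coefficient would be killed by this zero regardless of $m$, contradicting the second assertion of the proposition.

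The paper's actual mechanism is a pole--zero cancellation. Using the Kloosterman estimates of Ahlgren--Andersen \cite[Theorem~8]{zbMATH06530917} together with the argument of \cite[Lemma~5]{zbMATH06951117}, one shows that for $n<0$ the series $a_m(n,s)=L_{m,n}(s)$ continues to a neighbourhood of $s=\tfrac34$ with
\[
a_m(n,s)=\chi_{12}(\sqrt{-n})\,\chi_{12}(\sqrt{-m})\cdot\frac{3}{\pi^2}\,\frac{(-n)^{1/4}}{s-\tfrac34}+O(|n|^A)
\]
uniformly. Thus $a_m(n,s)$ has a simple pole at $s=\tfrac34$ precisely when both $-m$ and $-n$ are squares, and only then does the product $g_{m,n}(s)a_m(n,s)$ survive in the limit; its value is then governed by the residue, which is $\chi_{12}(\sqrt{-m})$ times an expression independent of $m$. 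This is where the factor $\chi_{12}(\sqrt{-m})$ genuinely enters, and it is what makes $\chi_{12}(\sqrt{-m})P_{m,\frac32,\frac34}-P_{-1,\frac32,\frac34}$ weakly holomorphic. Your proposal should replace the Sali\'e heuristic with this pole analysis; the spectral/Petersson route you mention would ultimately reproduce the same polar structure, but that is the content to be extracted, not a direct evaluation of the Kloosterman sums.
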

\begin{proof}
From Proposition \ref{prop_fourier-poincare}, we have the Fourier expansion
\[
P_{m,\frac{3}{2},s}=\varphi_{m,s}(z)+\sum_{n\equiv -1(24)}a_m(n,s)\mathcal{W}_n(y,s)e\left(\frac{nx}{24}\right)
\]
with
\[
\mathcal{W}_n(y,s)=-\frac{2\pi \Gamma(2s)}{\Gamma\left(s+\sgn(n)\frac 34\right)}\left|\frac{n}{m}\right|^{\frac{1}{4}}\left(\frac{\pi |n|y}{6}\right)^{-\frac 34} W_{\sgn(n)\frac 34,s-\frac 12}\left( \frac{\pi |n|y}{6}\right)
\]
and
\[
a_m(n,s)=\sum\limits_{c>0} \frac{K(-m',-n';c)}{c}\times 
\begin{dcases}
			 J_{2s-1}\left(\frac{\pi\sqrt{|mn|}}{6c}\right) &\text{ if } n<0, \\ 
			 I_{2s-1}\left(\frac{\pi\sqrt{|mn|}}{6c}\right) &\text{ if } n>0,
		\end{dcases}
\]
where $m=24m'-1$ and $n=24n'-1$.

Following the approach used in the proof of \cite[Lemma 5]{zbMATH06951117}, combined with the  estimates of Kloosterman sum established in \cite[Theorem 8]{zbMATH06530917}, the $a_m(n,y)$ satisfy the following.

Fix $m<0$, there exists a positive constant $A$ such that the following are true.
\begin{enumerate}[(1)]
\item For $n>0$, we have
\[
a_m(n,s)=O\left(n^A \exp\left(\frac{\pi\sqrt{n}}{6}\right)\right)
\]
uniformly for $s\in[\tfrac 34,1]$.
\item For $n<0$, we have
\[
a_m(n,s)=\chi_{12}(\sqrt{-n})\chi_{12}(\sqrt{-m})\frac{3}{\pi^2}\cdot\frac{(-n)^{\frac{1}{4}}}{s-\frac{3}{4}}+O(n^A)
\]
uniformly for $s\in(\tfrac 34,1]$ if both $-n$ and $-m$ are square, and for $s\in[\tfrac 34,1]$ if neither $-n$ nor $-m$ is a  square.
\end{enumerate}
We infer that when $-m$ is not a perfect square, the Fourier coefficients $a(n,s)\mathcal{W}_n(y,s)$ are holomorphic at $s=\frac{3}{4}$, and the non-holomorphic part of  $P_{m,\frac{3}{2},s}$ vanishes at $s=\frac{3}{4}$ due to the zero of $\Gamma(s-\frac 34)^{-1}$. Thus in this case, $P_{m,\frac{3}{2},\frac{3}{4}}$ is a well-defined weakly holomorphic modular form. When $-m$ is a perfect square, $a_m(n,s)$ has a simple pole at $s=\frac{3}{4}$ but in this case, the singularity cancels with the zero of $\Gamma(s-\frac 34)^{-1}$. In this case, $P_{m,\frac{3}{2},\frac{3}{4}}$ is a harmonic Maass form with non-zero shadow. Moreover, these shadows differ only by a scalar factor $\chi_{12}(\sqrt{-m})$. This establishes the second assertion of the proposition.
\end{proof}

\begin{proposition}\label{prop_spt-poincare}
Recall the function defined in \textnormal{(\ref{equation_genofspt})}, we have
\[
F(z)=P_{-1,\frac{3}{2},\frac{3}{4}}(z).
\]
\end{proposition}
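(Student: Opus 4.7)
The plan is to identify $F$ and $P:=P_{-1,\frac{3}{2},\frac{3}{4}}$ as two elements of $H^{!}_{3/2}(\overline{\varepsilon})$ whose principal parts and non-holomorphic parts coincide, so that their difference is a holomorphic cusp form of weight $3/2$ with multiplier $\overline{\varepsilon}$ on $\SL_2(\mathbb{Z})$; I would then kill that difference using $S_2(\SL_2(\mathbb{Z}))=0$.

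First I would establish that both functions lie in the same space. We have $F\in H^{!}_{3/2}(\overline{\varepsilon})$ by the result of Bringmann recalled at the end of Section \ref{section_weak-Harm}, and Proposition \ref{prop_poincare-analytic} (applied with $m=-1$, so that $-m=1$ is a perfect square) gives $P\in H^{!}_{3/2}(\overline{\varepsilon})$. Next I would match principal parts. From (\ref{equation_def-alpha}) the only $q$-term of $\alpha$ with negative exponent is the $n=-1$ contribution, equal to $q^{-1/24}$. On the other side, the Fourier expansion in Proposition \ref{prop_fourier-poincare} decomposes $P$ as $\varphi_{-1,3/4}(z)$ plus a sum over $n\equiv-1\pmod{24}$ whose $W$-Whittaker factors, by (\ref{equtaion_W-function-special}) with $k=3/2$, reduce to $e^{-y/2}$ for $n>0$ (contributing only non-negative-exponent holomorphic terms) and to an incomplete-gamma expression for $n<0$ (contributing only to the non-holomorphic part). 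Using $y^{-3/4}M_{-3/4,1/4}(y)=e^{y/2}$ at the special point $s=k/2$ gives $\varphi_{-1,3/4}(z)=q^{-1/24}$, matching the principal part of $F$.

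The main step, and the hard one, is matching the non-holomorphic parts. For each $n\equiv -1\pmod{24}$ with $n<0$, the non-holomorphic Fourier coefficient of $P$ at $s=3/4$ is the value of $a_{-1}(n,s)\,\mathcal{W}_n(y,s)$; the factor $1/\Gamma(s-3/4)$ inside $\mathcal{W}_n$ has a simple zero at $s=3/4$. When $-n$ is a perfect square, Proposition \ref{prop_poincare-analytic}(2) supplies the matching simple pole of $a_{-1}(n,s)$ with residue $\chi_{12}(\sqrt{-n})\cdot\frac{3}{\pi^2}\cdot(-n)^{1/4}$, and plugging in (\ref{equtaion_W-function-special}) together with $\Gamma(3/2)=\sqrt{\pi}/2$, the limit should simplify to
\[
-\tfrac{3}{\sqrt{\pi}}\,\chi_{12}(\sqrt{-n})\,|n|^{1/2}\,\Gamma\!\left(-\tfrac{1}{2},-\tfrac{\pi n y}{6}\right)q^{n/24},
\]
which is exactly the $n$-th term in the non-holomorphic expansion of $F$ in (\ref{equation_genofspt}). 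When $-n$ is not a square, $a_{-1}(n,s)$ is regular at $s=3/4$ while $1/\Gamma(s-3/4)$ vanishes, annihilating the contribution and again matching $F$. The delicate point will be tracking the factors $(\pi|n|y/6)^{-3/4}$ and $|n/m|^{1/4}$ in $\mathcal{W}_n$ through the residue/zero cancellation, but it is a finite computation.

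Finally I would conclude. The difference $F-P$ lies in $M_{3/2}(\overline{\varepsilon})$ with Fourier expansion supported on exponents $\geq 23/24$, hence in $S_{3/2}(\overline{\varepsilon})$. Multiplying by $\eta\in M_{1/2}(\varepsilon)$ produces an element of $S_2(\SL_2(\mathbb{Z}))=0$ (since $\overline{\varepsilon}\cdot\varepsilon$ is trivial), so $(F-P)\eta=0$ and therefore $F-P=0$, as claimed.
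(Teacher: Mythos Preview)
Your proposal is correct and follows essentially the same route as the paper's own proof: match the principal part $q^{-1/24}$ and the non-holomorphic part of $F$ and $P_{-1,3/2,3/4}$, then multiply the difference by $\eta$ and invoke $M_2(\SL_2(\mathbb{Z}))=0$. Your write-up actually carries out in more detail the residue/zero cancellation that the paper summarizes by ``it follows from the proof of Proposition~\ref{prop_poincare-analytic} and (\ref{equtaion_W-function-special})'', but the argument is the same.
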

\begin{proof}
It follows from the proof of Proposition \ref{prop_poincare-analytic} and (\ref{equtaion_W-function-special}) that the non-holomorphic part of $P_{-1,\frac{3}{2},\frac{3}{4}}$ equals
\[
-\frac{3}{\sqrt{\pi}}\sum_{0>n\equiv -1(24)}\chi_{12}(\sqrt{-n})|n|^{\frac{1}{2}}\Gamma\left(-\frac{1}{2},-\frac{\pi ny}{6}\right)q^{\frac{n}{24}}.
\]
Comparing with the Fourier expansion of $F$, we conclude that $F$ and $P_{-1,\frac{3}{2},\frac{3}{4}}$ share the same non-holomorphic part, as well as the same principal part $q^{-\frac{1}{24}}$. Thus the function $\eta(z)(F(z)-P_{-1,\frac{3}{2},\frac{3}{4}}(z))$ is a modular form of weight 2. However, since there is no non-zero holomorphic modular form of weight 2 for $\SL_2(\mathbb{Z})$,  it must be zero.
\end{proof}

\begin{remark}
Ahlgren and Andersen \cite{zbMATH06530917} using the theta lift of Bruinier and Funke \cite{zbMATH05030844} proved an exact formula for the  $\spt$ function:
\[
\spt(n)=\frac{\pi}{6}(24n-1)^{\frac{1}{4}}\sum_{c=1}^{\infty}\frac{A_c(n)}{c}(I_{3/2}-I_{1/2})\left(\frac{\pi\sqrt{24n-1}}{6c}\right),
\]
where $A_c(n):=K(0,-n;c)$. Proposition \ref{prop_spt-poincare} essentially provides an alternative proof of the above identity, relying solely on estimates of the Kloosterman sum.
\end{remark}

\begin{remark}
Jeon, Kang and Kim in \cite[Proposition 5.1]{zbMATH06283160} constructed the Maass-Poincar\'e series of weight $1/2$ with theta multiplier in Kohnen's plus space. It is noteworthy that the Maass-Poincar\'e series with theta multiplier exhibit a formal resemblance to the series constructed in the present paper. They further establish a duality between weight $3/2$ series and the weight $1/2$  series constructed by Duke, Imamo\={g}lu and T\'oth \cite{zbMATH05960675}. In fact, an analogous duality holds in our setting as well. The Maass-Poincar\'e series for general $m$ constructed in Proposition \ref{prop_spt-poincare} coincides with the mock modular grid introduced by Ahlgren and Kim in \cite{zbMATH06342625}, where they also established a duality between the mock modular grid and the one with weakly holomorphic modular forms of weight $1/2$. As we can now observe, this is essentially the duality between the Maass-Poincar\'e series of weights $1/2$ and $3/2$. Such phenomena are quite common, for instance, see \cite{zbMATH05271017,zbMATH02078167}.
\end{remark}

\section{Proof of the results}\label{section_proof}
The core result to be proven in this section is as follows: 
\begin{theorem}\label{theorem_inner-product}
For any Hecke eigenform $f$ of weight $2v+2$ on $\SL_2(\mathbb{Z})$, we have  
\[
\langle [F,\eta]_v,f \rangle=24^{-v}\binom{2v}{v}\frac{\Gamma(1+2v)}{(\pi/3)^{1+2v}}\cdot D_f.
\]
\end{theorem}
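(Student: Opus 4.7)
The plan is to exploit the Maass-Poincar\'e representation $F = P_{-1, 3/2, 3/4}$ from Proposition \ref{prop_spt-poincare} and unfold the Petersson integral. Since the series fails to converge at $s = 3/4$, I would first work at $\Re(s) > 1$, where Proposition \ref{prop_fourier-poincare} ensures absolute convergence, compute $\langle [P_{-1, 3/2, s}, \eta]_v, f\rangle$ as an explicit meromorphic function of $s$, and then use Proposition \ref{prop_poincare-analytic} to pass to $s = 3/4$. The only apparent poles there lie in the non-holomorphic piece of $P_{-1, 3/2, s}$, which is orthogonal to the cusp form $f$, so the continuation is clean.

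The first step is a Rankin-Cohen unfolding. Because $\varphi_{-1, s}|_{3/2}\gamma$ is twisted by $\varepsilon(\gamma)$ inside the Poincar\'e series while $\eta|_{1/2}\gamma = \varepsilon(\gamma)\eta$, the two multipliers cancel in every summand of the bracket, giving the clean identity
\[
[P_{-1, 3/2, s}, \eta]_v \;=\; \sum_{\gamma\in\Gamma_\infty\backslash\SL_2(\mathbb{Z})} [\varphi_{-1, s}, \eta]_v\big|_{2v+2}\gamma.
\]
Unfolding against $f$ collapses the Petersson product to the strip integral
\[
\int_0^\infty \int_0^1 [\varphi_{-1, s}, \eta]_v(z)\,\overline{f(z)}\,y^{2v}\,dx\,dy.
\]

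Substituting the Fourier expansions $\eta = \sum_{n\geq 1}\chi_{12}(n)q^{n^2/24}$ and $f = \sum_m a_f(m)q^m$, and writing $\varphi_{-1, s}(z) = A(y, s)\,e(-x/24)$, the Rankin-Cohen bracket expands as a double sum over $0 \le j \le v$ and $n \geq 1$ of terms of the shape $A_j(y, s)(n^2/24)^{v-j}\chi_{12}(n)\,e\!\left(\tfrac{(n^2-1)x}{24}\right)$, where $A_j$ is a polynomial in $y$-derivatives of $A$ determined by the Rankin-Cohen formula. Pairing against $\overline{f}$ and integrating in $x$ forces $m = (n^2-1)/24$, which selects integers $n$ coprime to $12$, and produces the Fourier coefficient $a_f((n^2-1)/24)$.

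What remains is a one-dimensional integral
\[
\int_0^\infty A_j(y, s)\,y^{2v}\,e^{-\pi(2n^2-1)y/12}\,dy,
\]
summed over $j$ with the Rankin-Cohen binomial weights. Evaluating this Mellin transform of a Whittaker function (and its $y$-derivatives) against an exponential is the main obstacle: one must verify that the full $j$-sum collapses, on setting $s = 3/4$, to
\[
\chi_{12}(n)\,a_f\!\left(\tfrac{n^2-1}{24}\right)\left(\frac{1}{(n-1)^{1+2v}} - \frac{1}{(n+1)^{1+2v}}\right)
\]
times the predicted prefactor. A natural route is to combine the $e^{\pm t/2}$ factors in the integral representation of $M_{\mu, \nu}(t)$ with the identity $(n-1)^{-1-2v} - (n+1)^{-1-2v} = \tfrac{2}{\Gamma(1+2v)}\int_0^\infty \sinh(y)\,e^{-ny}\,y^{2v}\,dy$, so that the two exponentials in the Whittaker function produce the difference of reciprocal powers after rescaling, while the Rankin-Cohen binomial identity reorganizes the algebraic coefficients. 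Summing over $n$ recovers $D_f$, and tracking the Whittaker and Gamma constants yields the prefactor $24^{-v}\binom{2v}{v}\Gamma(1+2v)/(\pi/3)^{1+2v}$.
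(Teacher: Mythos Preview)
Your overall strategy---write $F=P_{-1,3/2,3/4}$, unfold $\langle[P_{-1,3/2,s},\eta]_v,f\rangle$ for $\Re(s)>1$, pick off $m=(n^2-1)/24$ from the $x$-integral, and then let $s\to 3/4$---is exactly the route the paper takes. Two of the steps you flag as routine are, however, the whole content of the argument, and in your sketch they are not under control.

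First, the ``main obstacle'' integral. At $\Re(s)>1$ the seed $\varphi_{-1,s}$ is a genuine Whittaker function, not $q^{-1/24}$, so your $\sinh$ identity (which is really the $s=3/4$ answer repackaged) does not evaluate the Mellin transform you need at general $s$. The paper's mechanism is different and quite specific: one computes $\D^j\varphi_{-1,s}$ explicitly (Lemma~\ref{lemma_diff-of-phi}) and finds that every term carries a factor $(s-\tfrac34)^{\underline{i_1}}(s-\tfrac34)^{\overline{i_2}}$; the remaining $y$-integral is a standard Mellin--Whittaker transform giving a ${}_2F_1$. At $s=3/4$ those rising/falling factorials kill everything except $i_1=i_2=0$, the ${}_2F_1$ degenerates to $(1-n^{-2})^{-1-2v}$, and only then does the binomial sum over $j$ collapse to $(n-1)^{-1-2v}-(n+1)^{-1-2v}$. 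Without isolating this $(s-\tfrac34)$ structure you have no reason to expect the $j$-sum to simplify, and your outline gives none.

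Second, and more seriously, you do not address why the unfolded $n$-sum (equivalently, $D_f$ itself) converges at $s=3/4$. Deligne gives $a_f((n^2-1)/24)\ll n^{2v+1+\varepsilon}$ while $(n-1)^{-1-2v}-(n+1)^{-1-2v}\asymp n^{-2-2v}$, so the terms are only $O(n^{-1+\varepsilon})$ and the series is not absolutely convergent. The paper invokes Blomer's shifted-convolution estimate to obtain $\sum_{n\le x}\chi_{12}(n)\,a_f((n^2-1)/24)\ll x^{2v+2-\delta}$ and then applies partial summation; this is a genuine analytic input without which the analytic continuation of the unfolded expression to $s=3/4$ is unjustified. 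Your remark that ``the non-holomorphic piece of $P_{-1,3/2,s}$ is orthogonal to $f$'' is not the right reason either: these live in different weights, there is no such orthogonality, and the continuation of $I_s$ is established by showing the explicit unfolded series makes sense at $s=3/4$, not by any spectral cancellation.
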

We first apply this result to establish Theorem \ref{theorem_main}, Corollary \ref{corollary_modl} and Corollary \ref{coro_incon}, and the proof of Theorem \ref{theorem_inner-product} is deferred to Section \ref{subsection_proof-inner-product}.
\subsection{Proof of Theorem 1}
\begin{lemma}\label{lemma_RC-alpha-eta}
Recall the definition of $\alpha(z)$ in (\ref{equation_def-alpha}). Regarding it as a form of weight $3/2$, we have 
\[
[\alpha,\eta]_v=24^{-v}\binom{2v}{v}\mathcal{S}_v.
\]
\end{lemma}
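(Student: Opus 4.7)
The plan is to expand both sides as $q$-series and match coefficients term-by-term. Using the convention $s(0)=-1$ (from $\spt(0)=0$, $p(0)=1$), the holomorphic part is
\[
\alpha(z) = -\sum_{n\geq 0} s(n)\, q^{(24n-1)/24},
\]
while Euler's pentagonal number theorem together with the identity $(6k+1)^2-1=24\omega(k)$ yields the symmetric form
\[
\eta(z)=\sum_{k\in\mathbb{Z}} (-1)^k q^{(6k+1)^2/24}.
\]
Since $D=\frac{1}{2\pi i}\frac{d}{dz}$ multiplies each coefficient by its exponent, the factors $D^j\alpha$ and $D^{v-j}\eta$ pick up the extra weights $((24n-1)/24)^j$ and $((6k+1)^2/24)^{v-j}$ respectively.

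Forming the product, the combined exponent collapses to the integer $N:=n+\omega(k)$. Setting $a:=24N-(6k+1)^2$ and $b:=(6k+1)^2$ and factoring out $24^{-v}$ gives
\[
D^j\alpha\cdot D^{v-j}\eta = -24^{-v}\sum_{N\geq 0,\,k\in\mathbb{Z}}(-1)^k s(N-\omega(k))\, a^j b^{v-j} q^N.
\]
Summing with the Rankin-Cohen weights from (\ref{equation_def-RC-bracket}) for weights $3/2$ and $1/2$, and recalling that $q_v(N,k)=2^{-2v}\sum_j(-1)^j\binom{2v+1}{2j+1}a^j b^{v-j}$ from the definition immediately preceding (\ref{equation_q-and-chebyshv}), the lemma reduces to the single coefficient identity
\[
\binom{v+1/2}{v-j}\binom{v-1/2}{j}=\binom{2v}{v}2^{-2v}\binom{2v+1}{2j+1},\qquad 0\leq j\leq v.
\]

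The main computational task is verifying this binomial identity. I would convert all four binomials to Gamma functions and apply the Legendre duplication formula $\Gamma(n+1/2)=\sqrt{\pi}(2n)!/(4^n\, n!)$; using $(v+1/2)/(j+1/2)=(2v+1)/(2j+1)$, both sides reduce to $(2v+1)[(2v)!]^2/\bigl(4^v(v!)^2(2j+1)(2j)!(2v-2j)!\bigr)$. Conceptually, this identity reflects the classical fact that Rankin-Cohen brackets of weight $1/2$ theta-type series are governed by Gegenbauer/Chebyshev polynomials, in accord with the representation $q_v(n,k)=(6n)^v U_{2v}((6k+1)/\sqrt{24n})$ from (\ref{equation_q-and-chebyshv}). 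No substantive obstacle remains once this identity is in hand; the remainder of the argument is bookkeeping and reassembly of the $q$-series.
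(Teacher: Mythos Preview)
Your proof is correct and follows essentially the same route as the paper: expand $\alpha$ and $\eta$ as $q$-series, compute $\D^j\alpha\cdot\D^{v-j}\eta$ term-by-term, and reduce everything to the binomial identity
\[
\binom{v+\tfrac12}{v-j}\binom{v-\tfrac12}{j}=2^{-2v}\binom{2v}{v}\binom{2v+1}{2j+1},
\]
which is exactly the identity (\ref{equation_com-ident}) used in the paper; your substitution $a=24N-(6k+1)^2$, $b=(6k+1)^2$ and your Gamma/duplication verification of the identity are slightly more explicit than the paper's ``straightforward calculations,'' but the argument is the same.
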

\begin{proof}
By the definition of Rankin-Cohen bracket (\ref{equation_def-RC-bracket}) and 
\[
\eta(z)=\sum_{k\in\mathbb{Z}}(-1)^kq^{\omega(k)+\frac{1}{24}},
\]
we obtain
\begin{equation}\label{equation_exp-[a,e]}
\begin{aligned}
[\alpha,\eta]_v=-&\sum_{0\leq j\leq v}(-1)^j\binom{v+\frac{1}{2}}{v-j}\binom{v-\frac{1}{2}}{j}\\
&\times \left(\sum_{n\geq 0}(n-\tfrac 1{24})^js(n)q^{n-\frac{1}{24}}\right)\left(\sum_{k\in\mathbb{Z}}(\omega(k)+\tfrac{1}{24})^{v-j}(-1)^kq^{\omega(k)+\frac{1}{24}}\right).
\end{aligned}
\end{equation}
On the other hand, we have
\begin{equation}\label{equation_com-ident}
\binom{v+\frac{1}{2}}{v-j}\binom{v-\frac{1}{2}}{j}=2^{-2v}\binom{2v}{v}\binom{2v+1}{2j+1},
\end{equation}
Substituting it into (\ref{equation_exp-[a,e]}), and performing some straightforward calculations yields the desired result.
\end{proof}
\begin{proof}[Proof of Theorem \ref{theorem_main}]
By Proposition \ref{prop_holoprojofspt}, the holomorphic projection of $[F,\eta]_v$ is a modular form of weight $2v+2$, we can write it as 
\begin{equation}\label{equation_holo-express}
\pi_{\hol}([F,\eta]_v)=[\alpha,\eta]_v+24^{-v}\binom{2v}{v}\cdot 12F_{2v+2}=24^{-v}\binom{2v}{v}E_{2v+2}+\sum_{f}\frac{\langle\pi_{\hol}([F,\eta]_v),f \rangle}{\langle f,f\rangle}f,
\end{equation}
where the sum runs over the normalized Hecke eigenforms of weight $2v+2$. The coefficient of $E_{2v+2}$ is precisely the constant term of $[\alpha,\eta]_v$, whose explicit value can be deduced from Lemma \ref{lemma_RC-alpha-eta}. On the other hand, by the property of holomorphic projection and Theorem \ref{theorem_inner-product}, for any normalized eigenform $f$ of weight $2v+2$, we have
\begin{equation}\label{equation_inner-product}
\langle \pi_{\hol}([F,\eta]_v),f\rangle=\langle [F,\eta]_v,f\rangle=24^{-v}\binom{2v}{v}\frac{\Gamma(1+2v)}{(\pi/3)^{1+2v}}\cdot D_f.
\end{equation}
Theorem \ref{theorem_main} now follows by combining (\ref{equation_holo-express}), (\ref{equation_inner-product}), and Lemma \ref{lemma_RC-alpha-eta}.
\end{proof}

The examples following Corollary \ref{coro_recurr} are obtained as follows: by
\[
\frac{E_2}{\eta}=-24\D\left(\frac{1}{\eta}\right)=\sum_{n\geq 0}(1-24n)p(n)q^{n-\frac{1}{24}},
\]
isolating the $\spt(n)$ and $p(n)$ terms and using the expression of Theorem \ref{theorem_main}, we arrive at
\[
\mathcal{S}^{\prime}_{v}=E_{2v+2}-12F_{2v+2}+\frac{\Gamma(1+2v)}{(\pi/3)^{1+2v}}\sum_{f} \frac{D_f}{\langle f,f\rangle}f-24^v\binom{2v}{v}^{-1}\cdot [E_2\cdot \eta^{-1},\eta]_v,
\]
where 
\[
\mathcal{S}^{\prime}_{v}=\sum_{\substack{n\geq 0\\k\in\mathbb{Z}}}(-1)^{k+1}q_v(n,k)\cdot 12\spt(n-\omega(k))q^n.
\]
By mathematical induction, we conclude that the function $[E_2\cdot \eta^{-1},\eta]_v$ is a sum of quasi-modular forms of weight less than $2v+2$. The structure theorem for the ring of quasi-modular forms (see \cite[Section 5.1]{zbMATH05808162}) now implies that $[E_2\cdot \eta^{-1},\eta]_v\in \mathbb{Q}[E_2,E_4,E_6]$. Using the method of undetermined coefficients, for small values of $v$, we express $[E_2\cdot \eta^{-1},\eta]_v$ explicitly:
\begin{align*}
[E_2\cdot \eta^{-1},\eta]_1&=\frac{1}{24}E_4+\frac{1}{24}E_2^2,\\
[E_2\cdot \eta^{-1},\eta]_2&=\frac{1}{96}E_6-\frac{1}{192}E_4E_2+\frac{1}{192}E_2^3.
\end{align*}
The identities above furnish precisely the cases $v=1$ and $v=2$ in the examples.
\subsection{Proof of Corollary \ref{corollary_modl}} Using the congruence properties of the Chebyshev polynomials 
\[
U_{\frac{\ell-1}{2}}(x)\equiv (x^2-1)^{(\ell-1)/2}\equiv \left(\frac{x^2-1}{\ell}\right)\pmod{\ell}
\]
and (\ref{equation_q-and-chebyshv}), we deduce that
\[
q_{\frac{\ell-1}{2}}(n,k)\equiv \left(\frac{(6k+1)^2-24n}{\ell}\right)\pmod{\ell},
\]
and thus
\begin{equation}\label{equation_congurence-S_l-1}
\mathcal{S}_{\frac{\ell-1}{2}}\equiv \sum_{\substack{n\geq 0\\ k\in\mathbb{Z}}}(-1)^{k+1}\left(\frac{(6k+1)^2-24n}{\ell}\right)s(n-\omega(k))q^n.
\end{equation}
On the other hand, by the definition of $F_k$, we have $F_{\ell+1}\equiv F_{2}\pmod {\ell}$ and by Kummer’s congruences, we have $E_{\ell+1}\equiv E_2\pmod{\ell}$. From these facts, we obtain that
\begin{equation}\label{equation_cusppart}
\begin{aligned}
\mathcal{S}_{\frac{\ell-1}{2}}&\equiv E_2-12F_2+\frac{\Gamma(\ell)}{(\pi/3)^{\ell}}\sum_{f}\frac{D_f}{\langle f,f\rangle}f \\
&=\mathcal{S}_0 +\frac{\Gamma(\ell)}{(\pi/3)^{\ell}}\sum_{f}\frac{D_f}{\langle f,f\rangle}f \pmod{\ell}.
\end{aligned}
\end{equation}
Considering the coefficient of $q^{\ell n}$ in the preceding series, combined with (\ref{equation_congurence-S_l-1}), yields
\begin{equation}\label{equation_s-and-Df}
\begin{aligned}
\Tr_{\ell+1}(\ell n)&\equiv  \sum_{k\in\mathbb{Z}}(-1)^{k+1}\left(\left(\frac{(6k+1)^2-24\ell n}{\ell}\right)-1\right)s(\ell n-\omega(k))\\
&\equiv 12\sum_{6k\equiv-1\bmod{\ell}}(-1)^k \spt(\ell n-\omega(k))\pmod{\ell}.
\end{aligned}
\end{equation}
We also have 
\begin{equation}\label{equation_sumspt}
\sum_{n\geq 0}\sum_{6k\equiv -1\bmod{\ell}}(-1)^k\spt(n-\omega(k))q^n=\left(\sum_{n\geq 0}\spt(n)q^n\right)\cdot \left(\sum_{6k\equiv -1\bmod{\ell}}(-1)^kq^{\omega(k)}\right).
\end{equation}
On the other hand, we have
\begin{align*}
\sum_{6k\equiv -1\bmod{\ell}}(-1)^kq^{\omega(k)}&=q^{-1/24}\sum_{n\geq 1}\left(\frac{12}{\ell n}\right)q^{\frac{\ell^2n^2}{24}}\\
&=\left(\frac{3}{\ell}\right)\sum_{n\geq 1}\left(\frac{12}{\ell}\right)q^{\frac{\ell^2n^2}{24}}=\left(\frac{3}{\ell}\right)q^{\frac{\ell^2-1}{24}}(q^{\ell^2};q^{\ell^2})_{\infty}.
\end{align*}
Substituting the above expression into \eqref{equation_sumspt} and applying the operator $U_{\ell	}$ to both sides of the resulting equation, by utilizing the properties of $U_{\ell}$:
\[
U_{\ell}(f(q^{\ell})g(q))=f(q)\cdot U_{\ell}(g(q)),
\]
we obtain
\[
\sum_{n\geq 0}\sum_{6k\equiv-1\bmod{\ell}}(-1)^k \spt(\ell n-\omega(k))q^n\equiv \left(\frac{3}{\ell}\right)(q^{\ell};q^{\ell})_{\infty}\sum_{n\geq 1}\spt(\ell n-\delta_{\ell})q^n\pmod{\ell},
\]
which completes the proof.

The remark  following Corollary \ref{corollary_modl} can be deduced from \cite[Proposition 6]{zbMATH06865877}. The function $f_{\ell}$ appearing therein is precisely $-\mathcal{S}_{\frac{\ell-1}{2}}+\mathcal{S}_0$.

\subsection{Proof of Corollary \ref{coro_incon}}\label{subsection_proof-incongruence}
Equation (\ref{equation_congurence-S_l-1}) combined with \eqref{equation_cusppart} tell us that the $q$-series
\begin{equation}\label{equation_sptandp}
\eta(z)\cdot\sum_{n\geq 0}\left(\left(\frac{1-24n}{\ell}\right)-1\right)\left(12\spt(n)+(24n-1)p(n)\right)q^{n-\frac{1}{24}}
\end{equation}
is congruent modulo $\ell$ to a modular form of weight $\ell+1$. By multiplying \eqref{equation_sptandp} by $E_{\ell-1}^{\frac{\ell+1}{2}}\equiv 1\pmod{\ell}$, we can construct a modular form of weight $(\ell^2+1)/2$ that is congruent to \eqref{equation_sptandp} modulo $\ell$. Moreover, since $E_2\equiv E_{\ell+1}\pmod{\ell}$, we have
\begin{equation}\label{equation_partitionmol}
24\left(\frac{-24}{\ell}\right)\D^{\frac{\ell+1}{2}}\left(\frac{1}{\eta}\right)+\frac{E_{\ell+1}}{\eta}\cdot E_{\ell-1}^{\frac{\ell+1}{2}} \equiv\sum_{n\geq 0}\left(\left(\frac{1-24n}{\ell}\right)-1\right)(24n-1)p(n)q^{n-\frac{1}{24}}\pmod{\ell}.
\end{equation}
The differential operator $\D$ maps weight $k$ modular forms modulo $\ell$ to weight $k+\ell+1$ modular forms modulo $\ell$, it follows that the $q$-series \eqref{equation_partitionmol} multiplied by $\eta(z)$ is congruent to a modular form of weight $(\ell+1)^2/2$ modulo $\ell$. Thus
\[
\eta(z)\cdot\sum_{n\geq 0}\left(\left(\frac{1-24n}{\ell}\right)-1\right)\spt(n)q^{n-\frac{1}{24}}
\]
is congruent modulo $\ell$ to a modular form of weight $(\ell+1)^2/2$. Consequently, we may apply \cite[Theorem 1]{zbMATH07282550} (with $B=-1$) directly at this point to obtain the desired conclusion.

\subsection{Proof of Theorem \ref{theorem_inner-product}}\label{subsection_proof-inner-product}
For $\Re(s)>1$ and a normalized Hecke eigenform $f\in S_{2v+2}$, let
\[
I_s:=\langle[P_{-1,\frac{3}{2},s},\eta]_v,f\rangle.
\]
We shall demonstrate that $I_s$ admits an analytic continuation at $s=3/4$, and that its value at this point is precisely $24^{-v}\binom{2v}{v}\frac{\Gamma(1+2v)}{(\pi/3)^{1+2v}}\cdot D_f$. In conjunction with Proposition \ref{prop_poincare-analytic}, this establishes the proof of Theorem \ref{theorem_inner-product}. 

In the sequel, we employ the rising factorial which is defined by
\[
x^{\overline{n}}:=\begin{dcases}
x(x+1)\cdots (x+n-1)&\text{ if }n\geq 1,\\
1 &\text{ if }n=0,
\end{dcases}
\]
and falling factorial is defined as usual by
\[
x^{\underline{n}}:=\begin{dcases}
x(x-1)\cdots (x-n+1)&\text{ if }n\geq 1,\\
1 &\text{ if }n=0.
\end{dcases}
\]
We begin with a preparatory lemma.
\begin{lemma}\label{lemma_diff-of-phi}
We have
\begin{align*}
\D^j(\varphi_{-1,\frac{3}{2},s})=&(-24)^{-j}\sum_{0\leq i_1,i_2\leq j}\frac{j!}{i_1!i_2!(j-i_1-i_2)!}(-1)^{i_2}\frac{(s-\tfrac 34)^{\underline{i_1}}(s-\tfrac 34)^{\overline{i_2}}}{(2s)^{\overline{i_2}}}\\
&\times \left(\frac{\pi y}{6}\right)^{-i_1-\frac{i_2}{2}-\frac{3}{4}}M_{-\frac{3}{4}+\frac{i_2}{2},s-\frac{1}{2}+\frac{i_2}{2}}\left(\frac{\pi y}{6}\right)e\left(-\frac{x}{24}\right).
\end{align*}
\end{lemma}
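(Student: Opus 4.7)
The plan is to reduce the computation to iterating a single-variable operator on $u=\pi y/6$, then to prove a three-term recurrence via a contiguous relation for ${}_1F_1$, and finally to iterate by induction.

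First, for any smooth $h$ depending only on $u$, a direct computation with $\D=(2\pi i)^{-1}\partial_z=\frac{1}{4\pi i}(\partial_x-i\partial_y)$, taking into account the factor $e^{\pi y/12}$ produced when stripping $q^{-1/24}$ down to the character $e(-x/24)$, yields
\[
\D\bigl(h(u)\,e(-x/24)\bigr) \;=\; -\tfrac{1}{24}\bigl(\tfrac{1}{2}+\partial_u\bigr)h(u)\cdot e(-x/24).
\]
Iterating, $\D^j\bigl(h(u)\,e(-x/24)\bigr)=(-24)^{-j}\bigl(\tfrac{1}{2}+\partial_u\bigr)^j h(u)\cdot e(-x/24)$, so writing $\varphi_{-1,3/2,s}=G(u)\,e(-x/24)$ with $G(u)=u^{-3/4}M_{-3/4,\,s-1/2}(u)$, the lemma reduces to expanding $(\tfrac{1}{2}+\partial_u)^j G$. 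Via the Kummer identity $M_{\kappa,\mu}(u)=e^{-u/2}u^{\mu+1/2}{}_1F_1(\mu-\kappa+\tfrac{1}{2};\,2\mu+1;\,u)$, the Whittaker factor appearing in the $(i_1,i_2)$-summand of the statement equals
\[
\Psi_{i_1,i_2}(u)\;:=\;e^{-u/2}\,u^{s-3/4-i_1}\,{}_1F_1\bigl(s+\tfrac{3}{4};\,2s+i_2;\,u\bigr),
\]
so the goal becomes expanding $(\tfrac{1}{2}+\partial_u)^j G$ in these $\Psi_{i_1,i_2}$ with the claimed coefficients.

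The heart of the proof is a three-term recurrence. Using the conjugation $\tfrac{1}{2}+\partial_u=e^{-u/2}\partial_u e^{u/2}$, the product rule, the derivative identity $\tfrac{d}{du}{}_1F_1(A;B;u)=(A/B)\,{}_1F_1(A+1;B+1;u)$, and the standard contiguous relation
\[
A\cdot{}_1F_1(A+1;B+1;u)+(B-A)\cdot{}_1F_1(A;B+1;u)=B\cdot{}_1F_1(A;B;u)
\]
applied with $A=s+\tfrac{3}{4}$, $B=2s+i_2$ to eliminate the shifted first parameter, I would derive
\[
\bigl(\tfrac{1}{2}+\partial_u\bigr)\Psi_{i_1,i_2}\;=\;\Psi_{i_1,i_2}\;+\;(s-\tfrac{3}{4}-i_1)\,\Psi_{i_1+1,i_2}\;-\;\frac{s+i_2-3/4}{2s+i_2}\,\Psi_{i_1,i_2+1}.
\]

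The final step is induction on $j$. The base $j=0$ is immediate, and the inductive step applies the recurrence termwise and collects the coefficient of each $\Psi_{i_1,i_2}$ in $(\tfrac{1}{2}+\partial_u)^{j+1}G$. The factorial shift identities $(s-\tfrac{3}{4}-i_1+1)(s-\tfrac{3}{4})^{\underline{i_1-1}}=(s-\tfrac{3}{4})^{\underline{i_1}}$, $(s+i_2-\tfrac{7}{4})(s-\tfrac{3}{4})^{\overline{i_2-1}}=(s-\tfrac{3}{4})^{\overline{i_2}}$ and $(2s+i_2-1)(2s)^{\overline{i_2-1}}=(2s)^{\overline{i_2}}$, together with the sign accounting $(-1)\cdot(-1)^{i_2-1}=(-1)^{i_2}$, reduce the identity to the trinomial Pascal relation
\[
\binom{j+1}{i_1,i_2,i_3}=\binom{j}{i_1-1,i_2,i_3}+\binom{j}{i_1,i_2-1,i_3}+\binom{j}{i_1,i_2,i_3-1},\qquad i_3=j+1-i_1-i_2,
\]
which closes the induction. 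The main obstacle is the recurrence step above: the fact that $\tfrac{1}{2}+\partial_u$ closes on only three neighbouring $\Psi_{i_1,i_2}$, producing a trinomial rather than a higher multinomial expansion, depends entirely on this particular contiguous relation for ${}_1F_1$; any less symmetric identity would generate more terms and break the clean combinatorial structure of the final formula.
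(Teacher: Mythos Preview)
Your proof is correct, but it takes a different route from the paper's. The paper splits $\varphi_{-1,3/2,s}$ as the product of the three factors $\bigl(\tfrac{\pi y}{6}\bigr)^{s-3/4}$, $\bigl(\tfrac{\pi y}{6}\bigr)^{-s}e^{-\pi y/12}M_{-3/4,s-1/2}\bigl(\tfrac{\pi y}{6}\bigr)$, and $q^{-1/24}$, then applies the multinomial Leibniz rule directly; the middle factor is handled in one stroke by the closed Whittaker derivative identity $\tfrac{d^n}{dz^n}\bigl(e^{-z/2}z^{-\nu-1/2}M_{\mu,\nu}(z)\bigr)=(-1)^n\tfrac{(\mu+\nu+1/2)^{\overline{n}}}{(2\nu+1)^{\overline{n}}}e^{-z/2}z^{-\nu-n/2-1/2}M_{\mu+n/2,\nu+n/2}$, so the trinomial structure and all the falling/rising factorials drop out of Leibniz with no induction needed. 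Your approach instead passes to the Kummer ${}_1F_1$ form, proves a three-term recurrence for $(\tfrac{1}{2}+\partial_u)\Psi_{i_1,i_2}$ via a contiguous relation, and then runs an induction that reconstructs the trinomial Pascal rule by hand. In effect your recurrence is a re-derivation of the Whittaker derivative identity one step at a time, and your three terms correspond exactly to differentiating each of the paper's three factors. The paper's argument is shorter because it outsources the analytic work to a known formula; yours is more self-contained, needing only the elementary ${}_1F_1$ derivative and a single contiguous relation, at the cost of the explicit induction. One small remark: your parenthetical about ``stripping $q^{-1/24}$ down to $e(-x/24)$'' is unnecessary, since $\varphi_{-1,3/2,s}$ is already defined with the factor $e(-x/24)$ rather than $q^{-1/24}$; the operator identity $\D\bigl(h(u)e(-x/24)\bigr)=-\tfrac{1}{24}(\tfrac{1}{2}+\partial_u)h(u)\,e(-x/24)$ holds directly without any such reduction.
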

\begin{proof}
We express $\varphi_{-1,\frac{3}{2},s}$ as a product of the following three functions and compute its derivative:
\begin{align*}
\D^j&\left(\left(\frac{\pi y}{6}\right)^{s-\frac{3}{4}}\cdot\left(\frac{\pi y}{6}\right)^{-s}M_{-\frac{3}{4},s-\frac{1}{2}}\left(\frac{\pi y}{6}\right)e^{-\frac{\pi y}{12}}\cdot q^{-\frac{1}{24}}\right)\\
&=\sum_{\substack{i_1,i_2,i_3\geq 0\\i_1+i_2+i_3=j}}\frac{j!}{i_1!i_2!i_3!}\D^{i_1}\left(\left(\frac{\pi y}{6}\right)^{s-\frac{3}{4}}\right)\D^{i_2}\left(\left(\frac{\pi y}{6}\right)^{-s}M_{-\frac{3}{4},s-\frac{1}{2}}\left(\frac{\pi y}{6}\right)e^{-\frac{\pi y}{12}}\right)\D^{i_3}\left(q^{-\frac{1}{24}}\right).
\end{align*}
With $\D f(y)=\frac{-1}{4\pi}f(y)$ for functions depending only on $y$, the first and third parts are computed as follows:
\begin{align*}
&\D^{i_1}\left(\left(\frac{\pi y}{6}\right)^{s-\frac{3}{4}}\right)=(-24)^{-i_1}\left(\frac{\pi}{6}\right)^{i_1}(s-\tfrac{3}{4})^{\underline{i_1}}\left(\frac{\pi y}{6}\right)^{s-\frac{3}{4}-i_1},\\
&\D^{i_3}q^{-\frac{1}{24}}=(-24)^{-i_3}q^{-\frac{1}{24}}.
\end{align*}
For the second part, we need the following property about the Whittaker functions (see\cite[13.15.19]{NIST:DLMF}):
\[
\frac{d^n}{dz^n}\left(e^{-z/2}z^{-\nu-1/2}M_{\mu,\nu}(z)\right)=(-1)^n\frac{(\mu+\nu+1/2)^{\overline{n}}}{(2\nu+1)^{\overline{n}}}e^{-z/2}z^{-\nu-n/2-1/2}M_{\mu+n/2,\nu+n/2},
\]
and we obtain
\begin{align*}
\D^{i_2}&\left(\left(\frac{\pi y}{6}\right)^{-s}M_{-\frac{3}{4},s-\frac{1}{2}}\left(\frac{\pi y}{6}\right)e^{-\frac{\pi y}{12}}\right)\\
&=(-24)^{-i_2}(-1)^{i_2}\frac{(s-\tfrac34)^{\overline{i_2}}}{(2s)^{\overline{i_2}}}\left(\frac{\pi y}{6}\right)^{-s-\frac{i_2}{2}}M_{-\frac{3}{4}+\frac{i_2}{2},s-\frac{1}{2}+\frac{i_2}{2}}\left(\frac{\pi y}{6}\right)e^{-\frac{\pi y}{12}}.
\end{align*}
Combining the above three parts, we obtain the proof of the lemma.
\end{proof}

From the definition of the Maass-Poincar\'e series $P_{-1,\frac{3}{2},s}(z)$ (\ref{equat_def-of-poincare}) and the Rankin-Cohen bracket (\ref{equation_def-RC-bracket}), the function $[P_{-1,\frac{3}{2},s},\eta]_v$ can be written as
\[
\sum_{\gamma\in\Gamma_{\infty}\setminus \SL_2(\mathbb{Z})}\sum_{0\leq j\leq v}(-1)^j\binom{v+\frac{1}{2}}{v-j}\binom{v-\frac{1}{2}}{j}\D^j(\varphi_{-1,\frac{3}{2},s})\D^{v-j}(\eta)\bigg|_{2+2v}\gamma.
\]
By Lemma \ref{lemma_RC-alpha-eta}, $[P_{-1,\frac{3}{2},s},\eta]_v$ equals 
\begin{align*}
(24)^{-v}&\sum_{0\leq j\leq v}\binom{v+\frac{1}{2}}{v-j}\binom{v-\frac{1}{2}}{j}
\sum_{0\leq i_1,i_2\leq j}\frac{j!}{i_1!i_2!(j-i_1-i_2)!}(-1)^{i_2}\frac{(s-\tfrac 34)^{\underline{i_1}}(s-\tfrac 34)^{\overline{i_2}}}{(2s)^{\overline{i_2}}}\\
&\times \sum_{n\geq 0}n^{2v-2j}\chi_{12}(n)\left(\left(\frac{\pi y}{6}\right)^{-i_1-\frac{i_2}{2}-\frac{3}{4}}M_{-\frac{3}{4}+\frac{i_2}{2},s-\frac{1}{2}+\frac{i_2}{2}}\left(\frac{\pi y}{6}\right)e\left(-\frac{x}{24}\right)q^{\frac{n^2}{24}}\right)\bigg|_{2+2v}\gamma.
\end{align*}
Using the unfolding method, and since the Fourier coefficients of eigenform are real, we obtain
\begin{align*}
I_s&=(24)^{-v}\sum_{0\leq j\leq v}\binom{v+\frac{1}{2}}{v-j}\binom{v-\frac{1}{2}}{j}
\sum_{0\leq i_1,i_2\leq j}\frac{j!}{i_1!i_2!(j-i_1-i_2)!}(-1)^{i_2}\frac{(s-\tfrac 34)^{\underline{i_1}}(s-\tfrac 34)^{\overline{i_2}}}{(2s)^{\overline{i_2}}}\\
&\times \sum_{n\geq 0}n^{2v-2j}\chi_{12}(n)\int_{0}^{\infty}\left(\frac{\pi y}{6}\right)^{-i_1-\frac{i_2}{2}-\frac{3}{4}}M_{-\frac{3}{4}+\frac{i_2}{2},s-\frac{1}{2}+\frac{i_2}{2}}\left(\frac{\pi y}{6}\right)e^{-2\pi(\tfrac{n^2}{24}+m)y}y^{2v}dy\\
&\times\int_{0}^{1}\sum_{m\geq 1}a_f(m)e\left(\frac{n^2-24m-1}{24}x\right)dx.
\end{align*}
The integral with respect to $x$ in the above expression vanishes unless $m =\frac{n^2-1}{24}$, yielding
\begin{align*}
I_s&=(24)^{-v}\sum_{0\leq j\leq v}\binom{v+\frac{1}{2}}{v-j}\binom{v-\frac{1}{2}}{j}
\sum_{0\leq i_1,i_2\leq j}\frac{j!}{i_1!i_2!(j-i_1-i_2)!}(-1)^{i_2}\frac{(s-\tfrac 34)^{\underline{i_1}}(s-\tfrac 34)^{\overline{i_2}}}{(2s)^{\overline{i_2}}}\\
&\times \sum_{n\geq 0}n^{2v-2j}\chi_{12}(n)\int_{0}^{\infty}\left(\frac{\pi y}{6}\right)^{-i_1-\frac{i_2}{2}-\frac{3}{4}}M_{-\frac{3}{4}+\frac{i_2}{2},s-\frac{1}{2}+\frac{i_2}{2}}\left(\frac{\pi y}{6}\right)e^{-\frac{\pi(2n^2-1)}{12}y}y^{2v}dy.
\end{align*}
We now employ the following formula (see \cite[13.23.1]{NIST:DLMF})
\begin{align*}
\int_0^{\infty}&e^{-zt}t^{\alpha-1}M_{\mu,\nu}(\beta t)dt\\
&=\frac{\beta^{\nu+1/2}\Gamma(\nu+\alpha+1/2)}{(z+\beta/2)^{\nu+\alpha+1/2}}{}_2F_1\left(\nu-\mu+1/2,\nu+\alpha+1/2;2\nu+1;\frac{2\beta}{\beta+2z}\right),
\end{align*}
where ${}_2F_1$ is the Gaussian hypergeometric function, thereby obtain
\begin{equation}\label{equation_expr-I_s}
\begin{aligned}
I_s&=(24)^{-v}\sum_{0\leq j\leq v}\binom{v+\frac{1}{2}}{v-j}\binom{v-\frac{1}{2}}{j}
\sum_{0\leq i_1,i_2\leq j}\frac{j!}{i_1!i_2!(j-i_1-i_2)!}(-1)^{i_2}\frac{(s-\tfrac 34)^{\underline{i_1}}(s-\tfrac 34)^{\overline{i_2}}}{(2s)^{\overline{i_2}}}\\
&\times \left(\frac{6}{\pi}\right)^{1+2v}\sum_{n\geq 1}n^{2v-2j}\chi_{12}(n) a_f\left(\frac{n^2-1}{24}\right)\frac{\Gamma(s-i_1+2v+\tfrac 14)}{n^{2(s-i_1+2v+\frac 14)}}\\
&\times{}_2F_1\left(s+\frac{3}{4},s-i_1+2v+\frac{1}{4};2s+i_2;\frac{1}{n^2}\right).
\end{aligned}
\end{equation}
We claim that the series in $n$ in (\ref{equation_expr-I_s}) is holomorphic at 3/4.
Consequently, $I_s$ is defined at $s=3/4$, and by analytic continuation equals $\langle [F,\eta]_v,f\rangle$. To establish our claim, it suffices to show that the series
\[
\sum_{n\geq 1}\frac{\chi_{12}(n) a_f(\frac{n^2-1}{24})}{n^{2(s+v+\tfrac{1}{4})}}
\]
converges at $s=3/4$. Choose $1\leq M_n\leq 6$ with $M_n\equiv\frac{n+1}{2}$ if $n\not\equiv 7\pmod {12}$ and $M_n=2$ if $n\equiv 7\pmod {12}$. We factor $\frac{n^2-1}{24}$ as a product of two coprime positive integers:
\[
\frac{n^2-1}{24}=\frac{n+1}{2M_n}\cdot\frac{(n-1)M_n}{12}.
\]
Since $f$ is a Hecke eigenform, we have
\[
a_f\left(\frac{n^2-1}{24}\right)=a_f\left(\frac{n+1}{2M_n}\right)a_f\left(\frac{(n-1)M_n}{12}\right).
\]
Now we use a result of Blomer \cite[Theorem 1.3]{zbMATH02140876} and choose  a suitable test function as in the proof of \cite[Corollary 1.4]{zbMATH02140876}. We then have the estimate
\[
\sum_{1\leq n\leq x}\chi_{12}(n)a_f\left(\frac{n+1}{2M_n}\right)a_f\left(\frac{(n-1)M_n}{12}\right)\ll x^{2v+2-\delta}
\]
with some $\delta>0$, which via partial summation, establishes our claim. 

As $s \to 3/4$, the presence of the factor $s-3/4$ in (\ref{equation_expr-I_s}) ensures that all sums vanish except in the case $i_1=i_2=0$. Noting that
\[
{}_2F_1\left(\frac{3}{2},1+2v;\frac{3}{2};\frac{1}{n^2}\right)=\left(\frac{n^2}{n^2-1}\right)^{1+2v}.
\]
We have
\begin{equation}\label{equation_inner-D-e-f}
\begin{aligned}
\langle [F,\eta]_v,f\rangle&=24^{-v}\left(\frac{6}{\pi}\right)^{1+2v}\Gamma(1+2v)\sum_{0\leq j\leq v}\binom{v+\frac{1}{2}}{v-j}\binom{v-\frac{1}{2}}{j}\\
&\times \sum_{n\geq 1}\frac{a_f(\frac{n^2-1}{24})\chi_{12}(n)}{n^{2+2v+2j}}\left(\frac{n^2}{n^2-1}\right)^{1+2v}.
\end{aligned}
\end{equation}
Apply (\ref{equation_com-ident}) together with 
\[
\sum_{0\leq j\leq v}\binom{2v+1}{2j+1}\frac{2}{n^{1+2j}}=\left(1+\frac{1}{n}\right)^{1+2v}-\left(1-\frac{1}{n}\right)^{1+2v},
\]
and obtain
\begin{equation}\label{equation_last-ident}
\sum_{0\leq j\leq v}\binom{v+\frac{1}{2}}{v-j}\binom{v-\frac{1}{2}}{j}\frac{1}{n^{2j+1}}=2^{-2v-1}\binom{2v}{v}\left(\left(1+\frac{1}{n}\right)^{1+2v}-\left(1-\frac{1}{n}\right)^{1+2v}\right).
\end{equation}
Insertion of (\ref{equation_last-ident}) into (\ref{equation_inner-D-e-f}), followed by straightforward computations, completes the proof of the theorem.

\providecommand{\bysame}{\leavevmode\hbox to3em{\hrulefill}\thinspace}
\providecommand{\MR}{\relax\ifhmode\unskip\space\fi MR }
% \MRhref is called by the amsart/book/proc definition of \MR.
\providecommand{\MRhref}[2]{%
  \href{http://www.ams.org/mathscinet-getitem?mr=#1}{#2}
}
\providecommand{\href}[2]{#2}

\end{document}